\newtheorem{thm}{Theorem}[section]
\newtheorem{cor}[thm]{Corollary}
\newtheorem{lemma}[thm]{Lemma}
\newtheorem{prop}[thm]{Proposition}
\numberwithin{equation}{section}
\theoremstyle{definition}
\newtheorem{rem}[thm]{Remark}
\newtheorem{example}[thm]{Example}
\newtheorem{definition}[thm]{Definition}
\newcommand{\bC}{{\mathbb{C}}}
\newcommand{\bN}{{\mathbb{N}}}
\newcommand{\bQ}{{\mathbb{Q}}}
\newcommand{\bR}{{\mathbb{R}}}
\newcommand{\bT}{{\mathbb{T}}}
\newcommand{\bZ}{{\mathbb{Z}}}
  \newcommand{\A}{{\mathcal{A}}}
  \newcommand{\B}{{\mathcal{B}}}
  \newcommand{\C}{{\mathcal{C}}}
  \newcommand{\F}{{\mathcal{F}}}
  \newcommand{\G}{{\mathcal{G}}}
\renewcommand{\L}{{\mathcal{L}}}
  \newcommand{\M}{{\mathcal{M}}}
\renewcommand{\P}{{\mathcal{P}}}
  \newcommand{\Q}{{\mathcal{Q}}}
  \newcommand{\R}{{\mathcal{R}}}
\renewcommand{\S}{{\mathcal{S}}}
  \newcommand{\T}{{\mathcal{T}}}
  \newcommand{\U}{{\mathcal{U}}}
  \newcommand{\V}{{\mathcal{V}}}
\newcommand{\ul}{\underline  }
\newcommand{\ol}{\overline  }
\begin{document}


\title[Parallelogram frameworks and flexible quasicrystals]
{Parallelogram frameworks and flexible quasicrystals}






\author[S. C. Power]{S. C. Power}

\address{Dept.\ Math.\ Stats.\\ Lancaster University\\
Lancaster LA1 4YF \\U.K. }

\email{s.power@lancaster.ac.uk}

\begin{abstract}
The first-order flex space of the bar-joint framework $\G_P$ of a parallelogram tiling $P$ is determined in terms of an explicit free basis. Applications are given to  braced parallelogram frameworks and to quasicrystal frameworks associated with multigrids in the sense of de Bruijn and Beenker. 
In particular we characterise rigid bracing patterns, identify quasicrystal frameworks with finite dimensional flex spaces, and define 
a zero mode spectrum.
\end{abstract}

\thanks{2010 {\it  Mathematics Subject Classification.}
52C25, 52C23 \\
Key words and phrases: braced parallelogram frameworks,  multigrid quasicrystals, zero modes.
}

\maketitle


\section{Introduction}

A characterisation is obtained for the infinitesimal flex space of the bar-joint framework $\G_P$ of a parallelogram tiling $P$ of the plane.
Of particular interest are the quasicrystal frameworks arising from substitution systems or from tilings that are dual to multigrids.
The Penrose rhomb tilings and the Ammann-Beenker octagonal tilings given in Section \ref{ss:multigridQCs}  are perhaps the most well-known but there are many other striking tilings, with $n$-fold symmetries, for arbitrary $n\geq 4$. See, for example, Baake and Grimm  \cite{baa-gri} and the online resource \cite{fre-et-al}. 
The determination of infinitesimal flexes 
is obtained in terms of a countable {free basis}, 
in the sense of Badri, Kitson and Power \cite{bad-kit-pow-2}, for the vector space of {all} infinitesimal flexes, 
where the vector fields of the basis are explicit and are related to the infinite ribbon structure of the tiling. Such an identification generally gives diverse information on flexibility and rigidity and this is the case here. 

In Theorem \ref{t:shearbasis} we characterise the  bracings of infinite parallelogram frameworks which ensure first-order rigidity. The necessary and sufficient condition, that the braces graph be connected and spanning, is the same requirement as the  well-known Bolker-Crapo condition \cite{bol-cra-siam}, \cite{pow-nonEuc} for the rigidity of finite braced rectangular grids.
Finite braced Penrose frameworks and parallelogram frameworks have been considered by Wester \cite{wes} and Francis and Duarte \cite{dua-fra} who show the sufficiency of the condition.
Other proofs of necessity and sufficiency in the finite case are due to Nagy Kem \cite{nag}, through a characterisation of associated tensegrity frameworks, and to Graseggar and Legersk\'y \cite{gra-leg} who employ elegant combinatorial colouring arguments. These methods  are different from our direct approach  exploiting the geometry of parallelogram ribbons.

Bar-joint frameworks and their forms of flexibility provide mathematical models in materials science for the flexibility and rigidity of network materials. This can be seen in the analysis of floppy modes (infinitesimal flexes) for Penrose rhomb quasicrystals and zero modes (floppy modes with wave vectors) for crystals. For some recent examples see 
Rocklin et al \cite{roc-et-al}, Stenull and Lubensky  \cite{ste-lub}, and Zhou et al \cite{zho-et-al}. 
In this direction we show that general parallelogram frameworks do not admit floppy modes that tend to zero at infinity (Corollary \ref{c:novanishingflexes}). Also,
in Section \ref{s:multigridG} we consider regular  multigrid parallelogram tilings and associated quasicrystal frameworks which are obtained by bracing patterns. We define \emph{checkered quasicrystals} in this way and determine the finite dimension of their space of floppy modes (Theorem \ref{t:fintelyflexible}).

For multigrid parallelogram frameworks $\G_P$ we also characterise linearly localised floppy modes 
(Theorem \ref{t:FiguresformultigridGP}). The set of lines through the origin for such flexes is shown to be equal to the \emph{ribbon figure}  RF$(P)$ of $P$, a finite set of lines representing the directions of the ribbons, and these directions are determined explicitly in Proposition \ref{p:ribbondirection}.

For a crystallographic material the zero modes (rigid unit modes) and zero mode spectrum (RUM spectrum) are determined, in accordance with Bloch's theorem,  by a finite geometric data set, for a repeating block of bars and joints, and by phase variations over translated blocks \cite{dov-exotic}, \cite{dov-2019}, \cite{pow-poly}, \cite{weg}. Despite this apparent limitation zero modes capture many aspects of global first order motion. 
Indeed, the RUM spectrum can be viewed as a generalised Bohr spectrum for which the associated  modes have dense linear span in the space of almost periodic first-order motions 
\cite{bad-kit-pow-1}. It is of interest then to formulate analogues, 
in the case of quasicrystals.
We discuss some aspects of this in the final section and 
define zero modes and a zero mode spectrum for some quasicrystal bar-joint frameworks associated with multigrid parallelogram tilings.  





\section{Parallelogram frameworks}\label{s:penrose}
 We first discuss general parallelogram bar-joint frameworks $\G$ in $\bR^2$ and the nature of
their first-order motions. In particular we determine the structure of linearly localised motions and we characterise the bracing patterns for a parallelogram framework that make it rigid.
 
A bar-joint framework $\G=(G,p)$ in $\bR^d$ is a finite or countable simple graph $G=(V,E)$ together with a placement $p:V \to \bR^d$ of its vertices.
A real \emph{infinitesimal flex} of  $\G$  is a vector field, or velocity field, $u:p(V) \to \bR^d$,  that satisfies the first-order flex condition for every bar. That is,
\[
\langle u(p(v))-u(p(w)), p(v)-p(w)\rangle = 0, \quad \mbox{ for } vw \in E.
\]
The space $\F(\G)$ of these fields is a subspace of the vector space $\V(\G)$ of all velocity fields, and $\G$ is said to be \emph{infinitesimally rigid}, or \emph{first-order rigid}, if it coincides with the space of \emph{rigid motion infinitesimal flexes}. For $d=2$ this is the 3-dimensional space spanned by a non-zero rotation infinitesimal flex and 2 linearly independent translation infinitesimal flexes.

A \emph{parallelogram tiling} $P$ is an embedded graph  in $\bR^2$ associated with a tiling of $\bR^2$ by contiguous nondegenerate parallelogram tiles. That is, nondisjoint tiles meet at a common vertex or common edge, and tiles have positive area.
We assume, moreover, to avoid nonstandard tilings, that the set of tiles is connected in the sense that any pair of tiles is connected by a path of contiguous tiles.
A \emph{parallelogram bar-joint framework} $\G_P$ in $\bR^2$ is a bar-joint framework whose embedded graph is a parallelogram tiling $P$.
In particular $\G_P$ possesses evident shearing motions associated with so-called ribbons of parallelograms.
A  \emph{ribbon}  of $P$ is a 2-way infinite path of adjacent tiles for which the shared edges have a common direction. We define a \emph{ribbon shear} of $\G_P$ to be an infinitesimal flex which is the zero vector on all joints on one side of the ribbon, including the ribbon boundary, and  assigns a common non-zero velocity vector to the other joints.

The following simple fact will be useful in the proof of Theorem \ref{t:shearbasis}.
See also Frettl\"oh and Harriss \cite{fre-har}.
\begin{center}
\begin{figure}[ht]
\centering
\includegraphics[width=6cm]{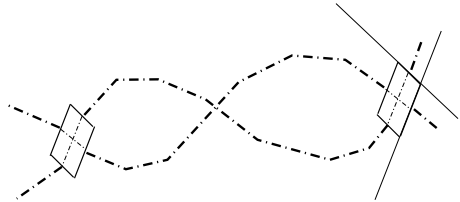}
\caption{A pair of impossible ribbons.}
\label{f:2ribbons}
\end{figure}
\end{center}
\begin{lemma}\label{l:2ribbons}
A pair of  ribbons of a parallelogram tiling share at most one tile.
\end{lemma}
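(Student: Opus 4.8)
The plan is to argue by contradiction using the fact that within a ribbon all the shared (transverse) edges are translates of one fixed vector. Suppose two distinct ribbons $R_1$ and $R_2$ share two tiles $T$ and $T'$. A ribbon is a bi-infinite path of tiles $\dots, T_{-1}, T_0, T_1, \dots$ in which consecutive tiles $T_i, T_{i+1}$ meet along an edge, and all these shared edges are parallel translates of a common vector; I will call this the \emph{direction} of the ribbon. Since $T$ and $T'$ both lie on $R_1$, the segment of $R_1$ between them is a finite chain of tiles $T = S_0, S_1, \dots, S_k = T'$ with consecutive tiles sharing an edge parallel to the direction $d_1$ of $R_1$; in particular, going from $T$ to $T'$ along $R_1$ one travels ``across'' tiles in the direction transverse to $d_1$. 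The same holds for $R_2$ with its own direction $d_2$.

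First I would observe that $d_1 \neq d_2$: if the two ribbons had the same direction, then the maximal strip of tiles sharing edges in that direction is unique, so $R_1 = R_2$, contrary to assumption. (Here I use that the tiling is a genuine parallelogram tiling, so each parallelogram has exactly two edge directions and a chain of tiles glued along one fixed direction extends uniquely in both senses.) Next, the key geometric step: each parallelogram tile $T_i$ of a ribbon has exactly two edges parallel to the ribbon direction (the two sides along which it meets $T_{i-1}$ and $T_{i+1}$) and two edges parallel to the complementary direction of that tile. So tile $T$, lying on both ribbons, must have a pair of opposite edges parallel to $d_1$ and the other pair parallel to $d_2$; since $d_1 \ne d_2$ this pins down the edge directions of $T$ completely, and likewise those of $T'$ — and $T$ and $T'$ have the same two edge directions $\{d_1, d_2\}$.

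The heart of the argument is then a ``translation'' claim: as one moves along $R_1$ from $T$ to $T'$, each tile is obtained from the next by sliding across an edge parallel to $d_1$, so the family of edges of these tiles parallel to $d_2$ all lie on a \emph{fixed} line $\ell_2$ — more precisely, the $d_1$-component of position is strictly monotone along the chain while one of the two $d_2$-parallel edges of each tile stays on a common line. Running the same reasoning along $R_2$ from $T$ to $T'$ forces a $d_1$-parallel edge of each tile on that chain to lie on a fixed line $\ell_1$. Combining, the two chains (one from $R_1$, one from $R_2$) both join $T$ to $T'$ but advance in transverse directions, which forces a contradiction with the planarity/non-overlapping of the tiling: the $R_1$-chain moves $T$ to $T'$ with a net displacement having a nonzero $d_2^{\perp}$-component (transverse to $d_1$), while the $R_2$-chain moves $T$ to $T'$ with a net displacement having a nonzero $d_1^{\perp}$-component, and these cannot be reconciled unless $k=0$, i.e.\ $T=T'$.

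The step I expect to be the main obstacle is making the ``fixed line'' / monotonicity claim fully rigorous without hand-waving: one must rule out that a ribbon wanders back on itself (so that the relevant position coordinate is genuinely monotone) and one must use non-overlapping of tiles to convert ``two transverse chains between the same two tiles'' into an actual contradiction. I would handle this by setting up coordinates $(x,y)$ with $x$-axis along $d_1$ and tracking, for the $R_1$-chain, that the supporting line of the $d_2$-parallel edges is constant while the $y$-extent of the tiles is a disjoint union of intervals, so $T'$ cannot coincide in the required way with a second tile of $R_2$ unless the chains are trivial. A cleaner alternative, which I would also consider, is to use Lemma-type input about ribbons being embedded paths together with the observation in Frettl\"oh and Harriss \cite{fre-har}: two ribbons sharing two tiles would create a closed region bounded by ribbon segments whose edge-vector bookkeeping is inconsistent, again yielding the contradiction. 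Either way the conclusion is that $T = T'$, so two distinct ribbons share at most one tile. \qed
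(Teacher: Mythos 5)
Your setup is sound and matches the paper's: the two shared tiles must both have edge directions $\{d_1,d_2\}$ (the paper phrases this as the tiles being translationally equivalent), and the essential geometric input is that a ribbon progresses monotonically in the direction transverse to its shared-edge direction. But the two steps you yourself flag as delicate are exactly where the argument breaks. First, the ``fixed line $\ell_2$'' claim is false: the intermediate tiles of the $R_1$-chain from $T$ to $T'$ need not have \emph{any} edge parallel to $d_2$ (only the shared edges are constrained to be parallel to $d_1$; the complementary edge direction varies from tile to tile), and even the $d_2$-parallel edges of $T$ and $T'$ themselves need not be collinear. What survives is only monotonicity of the $d_1^{\perp}$-coordinate along the chain. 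Second, and more seriously, the concluding contradiction does not exist: the $R_1$-chain shows the displacement $w$ from $T$ to $T'$ has nonzero component along $d_1^{\perp}$, and the $R_2$-chain shows $w$ has nonzero component along $d_2^{\perp}$; but a single nonzero vector can perfectly well have nonzero components in two non-parallel directions, so nothing forces $k=0$.

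The missing idea is topological rather than metric. The paper's proof confines the two ribbon segments joining the shared tiles $T_a$ and $T_b$ to a common closed region bounded by two lines through edges of $T_b$ (this is where monotonicity enters), deduces that the two transversal strips must therefore cross at some \emph{intermediate} tile, and obtains the contradiction by having chosen $T_b$ in advance to be the \emph{first} tile at which a second crossing occurs. Your ``cleaner alternative'' (inconsistent edge-vector bookkeeping around a closed region bounded by ribbon segments) gestures in roughly this direction but is not carried out; as written, neither route in your proposal actually reaches a contradiction, so the proof is incomplete.
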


\begin{proof}
Suppose that distinct ribbons $R_1$ and $R_2$ cross twice, at tiles $T_a, T_b$. These tiles are necessarily translationally equivalent. Consider the parts of these  ribbons that exit tile $T_a$ and  enter tile $T_b$. By the parallelogram structure of the ribbons these parts lie entirely in the same closed region bounded by lines $L_1, L_2$ through two edges of $T_b$.
Therefore, as Figure \ref{f:2ribbons} illustrates, these parts of the ribbons must cross at another tile. However we could have chosen $T_b$ to be the first tile encountered for a further crossing. This gives the desired contradiction.
\end{proof}

\subsection{The flex space of a parallelogram framework}
\label{ss:flexspaceparallelogram}
 Let $\G$ be a countable bar-joint framework.
A sequence $u^{(n)}$ of velocity fields in $\V(\G)$ \emph{tends to zero strictly} if for every joint $p_i$ of $\G$ the velocities $u^{(n)}(p_i)$ are nonzero for finitely many $n$. In this case any infinite sum $\sum_{n} \alpha_nu^{(n)}$, with real scalars $\alpha_n$, defines a velocity field. A \emph{free spanning set} for the space  $\F(\G)$ of infinitesimal flexes is a countable set in $\F(\G)$ that tends to zero strictly, in any enumeration, and is such that every infinitesimal flex  in $\F(\G)$ has such an infinite sum representation. When those sums are unique then we have the following definition of a free basis.
Free spanning sets and bases, and their symmetric variants for crystal frameworks, were examined in Badri, Kitson and Power \cite{bad-kit-pow-2}.

\begin{definition}\label{d:freebasis}
A \emph{free basis} for the infinitesimal flex space $\F(\G)$ of a countable bar-joint framework $\G$ in $\bR^d$ is a countable set $u^{(n)}$ in $\F(\G)$ which tends to zero strictly and is such that each infinitesimal flex has a unique  infinite sum representation $\sum_{n} \alpha_nu^{(n)}$.
\end{definition}

In the proof of the next theorem we use the following notation for a 
parallelogram framework $\G$ with a distinguished  base joint $p_1$.
For each ribbon $\rho$ let $u^\rho$ be a nonzero ribbon shear for $\rho$ which assigns the zero velocity vector to $p_1$. Thus $u^\rho$ has zero velocities at all joints to one side of $\rho$, the side containing $p_1$, and is a fixed velocity vector at all other joints. Let $\S$ be the set of these ribbon shears. In particular $\S$ is countable and in any enumeration the $u^\rho$ tend to zero strictly.
Let $u^x, u^y$ be nonzero infinitesimal flexes for translation in the direction of the $x$-axis and the $y$-axis respectively.

\begin{thm}\label{t:shearbasis} Let $\G$ be a 
parallelogram bar-joint framework with base joint $p_1$ and an associated set $\S$ of ribbon shears. Then 
$\S \cup \{u^x, u^y\}$ is a free basis for $\F(\G)$.
\end{thm}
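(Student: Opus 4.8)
The plan is to establish three things: (i) that $\S \cup \{u^x,u^y\}$ consists of genuine infinitesimal flexes which tend to zero strictly; (ii) that this set spans $\F(\G)$ in the free (infinite-sum) sense; and (iii) that the representations are unique. Part (i) is essentially built into the setup: each $u^\rho$ is a ribbon shear, hence a flex by the defining property that bars of tiles not straddling $\rho$ see zero relative velocity and bars of straddling tiles are translated rigidly along their shared edge direction; the strict-to-zero property follows because any fixed joint $p_i$ lies on only finitely many ribbons (in fact at most two), so $u^\rho(p_i)\neq 0$ for only finitely many $\rho$. The translations $u^x,u^y$ are flexes trivially and do not affect strictness.

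The heart of the argument is part (ii). Given an arbitrary $u\in\F(\G)$, I would first normalise by subtracting a suitable combination $\alpha u^x+\beta u^y$ so that $u(p_1)=0$; call the result $u$ again. The key geometric observation is that along any single ribbon $\rho$, the first-order flex condition forces the velocity field to be "affine transverse to $\rho$": consecutive parallelograms in $\rho$ share an edge of fixed direction $e_\rho$, and tracking the flex condition across these bars shows the relative velocity of the two boundary lines of $\rho$ is a constant vector parallel to $e_\rho$ — this is exactly the shear parameter. So to each ribbon $\rho$ one can attach a scalar $\alpha_\rho$ (the magnitude of that shear, normalised against $u^\rho$). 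I would then argue that $u - \sum_\rho \alpha_\rho u^\rho$ is identically zero: pick any joint $q$; a finite chain of contiguous tiles connects $p_1$ to $q$ (using the connectedness hypothesis on $P$), this chain crosses finitely many ribbons, and walking along it one checks that the accumulated velocity predicted by $\sum_\rho \alpha_\rho u^\rho$ matches $u(q)$ exactly — at each step crossing a ribbon $\rho$ contributes precisely $\alpha_\rho$ times the jump of $u^\rho$, and within a ribbon the flex is determined by the parallelogram rigidity of a single tile together with the values already fixed on one of its edges. Lemma~\ref{l:2ribbons} enters here to guarantee the bookkeeping is consistent: since two ribbons meet in at most one tile, the "contribution from crossing $\rho$" is well-defined and path-independent.

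For uniqueness (iii), suppose $\sum_\rho \alpha_\rho u^\rho + \alpha u^x + \beta u^y = 0$. Evaluating at $p_1$ kills all the $u^\rho$ terms and gives $\alpha=\beta=0$ since $u^x,u^y$ are linearly independent. Then for a fixed ribbon $\rho_0$, choose a tile $T$ on $\rho_0$ and a joint $q$ on the far side of $\rho_0$ from $p_1$ but adjacent across exactly one ribbon, namely $\rho_0$ itself; evaluating the relation at $q$ (where only $u^{\rho_0}$ is nonzero among the shears through that local configuration, again by Lemma~\ref{l:2ribbons}) forces $\alpha_{\rho_0}=0$. Iterating over all ribbons gives the claim. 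I expect the main obstacle to be making the path-tracking argument in (ii) fully rigorous: one must verify that the decomposition of $u$ into per-ribbon shear contributions is independent of the chosen chain of tiles from $p_1$ to $q$, which amounts to checking consistency around any closed loop of tiles — here the parallelogram structure (opposite edges parallel and equal, so holonomy around a tile is trivial) together with Lemma~\ref{l:2ribbons} should close the gap, but the combinatorial-geometric details of the loop argument are where care is needed.
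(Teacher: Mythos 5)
Your route is genuinely different from the paper's. The paper proceeds by an inductive patch-growing construction: it cancels the flex on a base tile, then enlarges a simply connected maximal patch one contiguous tile at a time, using Lemma \ref{l:2ribbons} to guarantee that the ribbon of each newly attached tile does not re-enter the patch, so that exactly one new coefficient is determined at each step (and it must then check, at the end, that every ribbon shear actually occurs in the resulting basis). Your proposal instead extracts the coefficients directly: the flex conditions on the four bars of each tile of a ribbon $\rho$ force the ``jump'' $u(a')-u(a)$ to be the same vector for every internal edge $aa'$ of $\rho$, and this jump vanishes on $u^x$, $u^y$ and on every $u^\sigma$ with $\sigma\neq\rho$, since each edge of the tiling is an internal edge of exactly one ribbon. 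This gives a coordinate functional for each ribbon and is an attractive, arguably more elementary device: setting $w=u-\sum_\rho\alpha_\rho u^\rho$ with $\alpha_\rho$ read off from the jump, $w$ has zero jump across every edge of the tiling and is therefore constant, hence zero. Framed this way no path-independence or loop-holonomy check is needed, and Lemma \ref{l:2ribbons} is not needed either; the worry you raise at the end is a red herring.

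Several of your supporting statements are, however, wrong as written and need repair. First, the jump across an internal edge in direction $e_\rho$ is \emph{orthogonal} to $e_\rho$ (that is exactly the first-order flex condition), not parallel to it; this matches the fact that the nonzero velocity of $u^\rho$ is the vector ${\bf t}_\rho$ orthogonal to the internal edges. Second, ``any fixed joint lies on at most two ribbons'' is false (a joint lies on as many ribbons as there are tiles in its star), and in any case the relevant finiteness is that only finitely many ribbons \emph{separate} a given joint from $p_1$; that is what makes $\S$ tend to zero strictly and makes $\sum_\rho\alpha_\rho u^\rho$ converge. Third, ``within a ribbon the flex is determined by the values on one of its edges'' is false: fixing a flex of a parallelogram $4$-cycle on one edge leaves a one-parameter family, and the missing parameter is precisely the jump across the transverse ribbon. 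Fourth, and most seriously, your uniqueness argument needs a joint $q$ separated from $p_1$ by $\rho_0$ and by no other ribbon; such a joint does not exist unless $\rho_0$ passes through a tile containing $p_1$ (already in the square grid, a joint several columns away from $p_1$ lies on the far side of several column ribbons simultaneously). Uniqueness should instead be obtained by applying the jump functional for $\rho_0$ to the relation $\sum_\rho\alpha_\rho u^\rho+\alpha u^x+\beta u^y=0$, which gives $\alpha_{\rho_0}=0$ directly, with $\alpha=\beta=0$ from evaluation at $p_1$. With these corrections your outline does yield a complete proof.
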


\begin{proof}
Let $z$ be an infinitesimal flex. 
Subtract a unique linear combination of $u^x, u^y$, say $w_1$, to obtain the infinitesimal flex $z_1=z-w_1$ with $z_1(p_1)=0$. Let $T_0$ be a tile that contains $p_1$ and let $\rho_a$ and $\rho_b$ be the ribbons through $T_0$.
We may add to $w_1$ a unique linear combination of the two corresponding ribbon shears to create $w_2$ and $z_2=z-w_2$ so that $z_2$ is zero on the 4 joints for $T_0$.
We say 
that the linear combination $w_2$ \emph{cancels  $z$ on the tile}  $T_0$, and that the ribbons ${\rho_a},{\rho_b}$ and the flexes $u^x,u^y,u^{\rho_a},u^{\rho_b}$, \emph{have been used}. 

\begin{center}
\begin{figure}[ht]
\centering
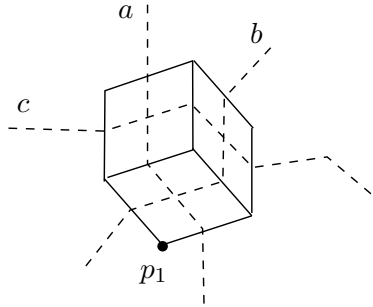
\caption{Three ribbons for the patch $T_0\cup T_1\cup T_2$. The lower tile, $T_0$, contains the vertex for the base joint $p_1$.}
\label{f:penrosepatch}
\end{figure}
\end{center}

Let $T_1$ be a tile that is contiguous to $T_0$, in the sense that it shares a tile edge with $T_0$, and lies on the unused ribbon $\rho_c$. Since $u^{\rho_c}$  vanishes on the joints for $T_0$ we may add an appropriate unique scalar multiple of $u^{\rho_c}$ to  $w_2$ to obtain $w_3$, which cancels $z$ on $T_0\cup T_1$. In this case we add $\rho_c$ to the set of used ribbons.

If there is a further tile $T_2$ contiguous to the patch 
$T_0\cup T_1$ which lies on 2 used ribbons then 3 of its joints lie at boundary vertices of this patch and so $z$ is also cancelled by $w_3$ on the larger connected patch $\P_0=T_0\cup T_1\cup T_2$. Figure \ref{f:penrosepatch} illustrates such a patch.

For the purposes of induction, assume that 
$\P$ is a simply connected patch of tiles, containing $T_0$, whose topological boundary is a simple closed curve.
Also, suppose that $\S_\P$ is a finite set of ribbon shears $u^\rho$ which have been used to obtain a unique linear combination $w_\P$ that cancels $z$ on $\P$. In particular each such ribbon $\rho$ includes at least one tile of the patch.  
We also assume that, like $\P_0$, the patch is maximal in the sense that there are no tiles outside $\P$ that have 3 or 4 vertices on the boundary of $\P$. 

Consider a tile $T$ not in $\P$ which shares an edge $e$ of $\P$. There is a unique ribbon $\tau$ which contains $T$ with $e$ lying in one of the two components of the boundary of $\tau$. We claim first that $\tau$ contains no tiles of $\P$. Suppose that this were so. Then, by the maximal property of $\P$ there is a nonempty patch $\Q$ of tiles between $\P$ and the boundary of $\tau$ containing $e$. See Figure \ref{f:reentrant}. Indeed, if $\Q$ is empty then there would have to be a final tile of $\tau$ outside $\P$ with 3 vertices in the boundary of $\P$. However, $\Q$ being nonempty is also not possible. To see this assume that $\Q$ has the least number of tiles for such a patch and that $T'$ is a tile in $\Q$ contiguous to $\P$. Then there is a ribbon though $T'$ which, by Lemma \ref{l:2ribbons}, must re-enters $\P$, and this contradicts the minimality of $\Q$.

Similarly, the set of tiles in $\tau$ that are contiguous to $\P$ must form a connected subset, or finite subribbon, of $\tau$. For otherwise there is a tile in $\Q$ which shares an edge $f$ of the boundary of $\P$, as in Figure \ref{f:reentrant}. 
This tile lies on a ribbon
$\tau'$ with $f$ in one of its boundary curves. By Lemma \ref{l:2ribbons} $\tau'$ must re-enter $\P$ and this is not possible.
\begin{center}
\begin{figure}[ht]
\centering
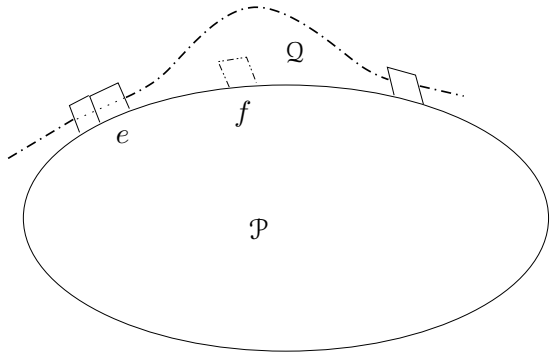
\caption{An impossible patch $\Q$ when $\tau$ is contiguous to $\P$.}
\label{f:reentrant}
\end{figure}
\end{center}


Now let $\P^+$ be the union of the tiles of the subribbon  with $\P$. Then there is a unique multiple of $u^\tau$ which when added to $w_\P$ cancels $z$ on $\P^+$. 
Let ${\P_1}$ be the, necessarily finite,  patch obtained from  $\P^+$ by successively adding tiles that share 3 vertices of the boundary.

In this way we complete an induction step. Moreover we may choose $T$ arbitrarily and so, since the tile-adjacency graph is connected, by our standing assumptions for $P$, it follows that a sequence of such choices is possible so that the union of the tiles of the resulting patch chain $\P_1, \P_2, \dots $ is the tiling $P$.

Note that the patch chain construction is independent of  the infinitesimal flex $z$, and so we obtain a set $\S_0$ of ribbon shears which, together with $u^x, u^y$ give a free basis for $\F(\G)$.

Finally we show that $\S_0=\S$. Let $u^\sigma \in \S\backslash \S_0$. Then,
since $u^\sigma(p_1)=0$ there is, by the construction above, a unique representation of the form  $u^\sigma = \sum_{\rho\in\S_0}\alpha_\rho u^\rho$ (where we write
${\rho\in\S_0}$ to denote ${u^\rho\in\S_0}$). 
Let $\tau\in \S_0$ be a ribbon that crosses $\sigma$.
Such ribbons exist since by the inductive construction every parallelogram lies on some ribbon of $\S_0$.
Let $\T$ be the 4-bar framework for the tile $T$ common to $\sigma$ and $\tau$. Since the restriction $u^{\sigma'}|_\T$ is an infinitesimal translation of $\T$, for every $\sigma' \neq \sigma, \tau$, it follows that $u^\sigma|_\T$ and $\alpha_{\tau}u^\tau|_\T$
differ by a translation. But
$u^\sigma$ and $u^\tau$ both have zero velocity vector on a joint of $\T$ and so we obtain the contradiction $u^\sigma|_\T = \alpha_{\tau}u^\tau|_\T$,  as desired.
\end{proof}

In view of our assumptions for a parallelogram tiling $P$ there are only finitely many vertices of $P$ in any disc $D(0,r)$ about the origin of finite radius $r$. We say that an infinitesimal flex $u$ of $\G_P$ \emph{tends to zero at infinity} if for each $\epsilon >0$ there is a radius $R>0$ such that $\|u(p_i)\|_2 <\epsilon$ for all joints $p_i$ located outside $D(0,R)$. Equivalently, for any enumeration of the joints, $p_1, p_2,\dots $, the sequence of velocities $u(p_1), u(p_2), \dots $ tends to zero.

\begin{cor}\label{c:novanishingflexes}
A parallelogram framework has no nonzero infinitesimal flex which tends to zero at infinity. In particular there are no nonzero infinitesimal flexes which are finitely supported or which are square-summable. 
\end{cor}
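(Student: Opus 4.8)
The plan is to deduce Corollary \ref{c:novanishingflexes} directly from the free-basis description in Theorem \ref{t:shearbasis}. Let $u\in\F(\G_P)$ be an infinitesimal flex of a parallelogram framework that tends to zero at infinity; we must show $u=0$. By Theorem \ref{t:shearbasis} we may write $u = \alpha\,u^x + \beta\,u^y + \sum_{\rho}\alpha_\rho u^\rho$ uniquely, where the sum runs over ribbons $\rho$ and the $u^\rho$ tend to zero strictly. The idea is that each ribbon shear $u^\rho$ contributes a \emph{fixed} nonzero velocity vector on the entire half-plane side of $\rho$ not containing $p_1$; this side contains joints arbitrarily far from the origin, so a single uncancelled ribbon shear cannot vanish at infinity, and the linear terms $u^x,u^y$ cannot vanish at infinity either. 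One then has to argue that the contributions cannot conspire to cancel each other off at infinity.

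Here is the sequence of steps I would carry out. First, evaluate $u$ on joints of a chosen tile $T_0$ containing $p_1$: by the strict-zero property only finitely many $u^\rho$ are nonzero at any given joint, so the equation $u = \alpha u^x + \beta u^y + \sum_\rho \alpha_\rho u^\rho$ makes literal pointwise sense. Second, pick any ribbon $\sigma$ and consider the two ribbons through some tile far out along $\sigma$; better, fix attention on a tile $T$ and note that only the finitely many ribbons through tiles ``between'' $p_1$ and $T$ can be nonzero at $T$ — more precisely, $u^\rho(p)\ne 0$ exactly when $p$ lies strictly on the far side of $\rho$. Third, to exploit vanishing at infinity, I would run the induction of Theorem \ref{t:shearbasis} in reverse: suppose $u\ne0$, and let $\rho$ be a ribbon with $\alpha_\rho\ne0$ that is ``outermost'' in a suitable sense — i.e. pick a joint $q$ on which $u$ is nonzero, only finitely many $u^\rho$ are active at $q$, and among those active ribbons choose one whose far side (relative to $p_1$) is minimal, so no other active-at-$q$ ribbon separates $q$ from $p_1$ on that side. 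Moving $q$ outward along that far side, the velocity $u(q)$ is eventually a fixed nonzero vector (the sum of the finitely many constant contributions there), contradicting $u(q)\to0$. If instead $\alpha_\rho=0$ for all $\rho$, then $u=\alpha u^x+\beta u^y$ is a nonzero translation field unless $\alpha=\beta=0$, and a nonzero translation does not tend to zero at infinity; hence $u=0$. Finally, note that finitely supported flexes and square-summable ($\ell^2$) flexes both tend to zero at infinity, so the last sentence of the corollary follows immediately.

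The main obstacle is the ``outermost ribbon'' bookkeeping in the third step: making precise that one can choose a joint $q$ and an active ribbon $\rho$ so that, as $q$ moves to infinity along $\rho$'s far side, the set of ribbons active at $q$ with nonzero coefficient stabilizes to a set on which all contributions are constant, so that $u(q)$ converges to a fixed nonzero vector rather than oscillating or drifting to zero. Lemma \ref{l:2ribbons} is the key geometric input here: two ribbons meet in at most one tile, so moving along $\sigma$ one crosses each other ribbon at most once, which controls how the active set can change. Concretely, I would fix the ribbon $\sigma$ through a tile $T_0\ni p_1$, traverse $\sigma$ away from $p_1$, and track the partial sums of $u$ restricted to the joints of successive tiles of $\sigma$; each step across a tile of $\sigma$ either introduces one new ribbon shear (constant thereafter along this direction) or leaves the active set unchanged, so after finitely many steps the velocity along $\sigma$ is eventually constant, and it must then be the zero vector, forcing (by the same argument applied to every ribbon, together with the uniqueness in Theorem \ref{t:shearbasis}) all coefficients to vanish. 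Once this stabilization is established the rest is routine.
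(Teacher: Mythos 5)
Your overall strategy --- expand $u$ in the free basis of Theorem \ref{t:shearbasis} and derive a contradiction from a nonzero coefficient --- is the paper's strategy, but the key step as you execute it contains a genuine gap. The claim that ``after finitely many steps the velocity along $\sigma$ is eventually constant'' is false. Every tile of $\sigma$ lies on exactly one other ribbon, and by Lemma \ref{l:2ribbons} these crossing ribbons $\tau_1,\tau_2,\dots$ are all distinct; so as you traverse $\sigma$ away from $p_1$ you cross a \emph{new} ribbon at every single step, and each crossing adds a new constant increment $\alpha_{\tau_k}{\bf t}_{\tau_k}$ to the velocity on the far side. The set of ribbon shears active at a joint $q$ is finite for each fixed $q$, but it grows without bound as $q\to\infty$; nothing forces all but finitely many of the $\alpha_{\tau_k}$ to vanish (that is essentially what the corollary is trying to prove), so $u(q)$ is a genuinely infinite partial sum and there is no ``outermost'' active ribbon and no stabilization. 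The same objection defeats the earlier version of the argument with a minimal far side. A secondary issue: even if the velocity did stabilize to $0$, a vanishing finite sum $\alpha_x u^x+\alpha_y u^y+\sum\alpha_{\tau_j}{\bf t}_{\tau_j}$ does not by itself force the individual coefficients to vanish, since the tangential vectors ${\bf t}_{\tau_j}$ may be linearly dependent.

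The repair, which is the paper's proof, is to stop tracking absolute velocities and instead track velocity \emph{differences within a single tile}, where the infinite tail is invisible. Fix a ribbon $\rho$ with $\alpha_\rho\neq 0$ and let $T_k$ be tiles of $\rho$ crossed by the distinct ribbons $\tau_k$. On the $4$-joint subframework $\T_k$, every basis flex other than $u^\rho$ and $u^{\tau_k}$ restricts to a translation, so $u|_{\T_k}$ equals $\alpha_\rho u^\rho|_{\T_k}+\alpha_{\tau_k}u^{\tau_k}|_{\T_k}$ plus a translation. Both $u^\rho$ and $u^{\tau_k}$ vanish at one common joint of $\T_k$, which pins down the translation as the value of $u$ there; hence at the joint of $\T_k$ where $u^{\tau_k}$ vanishes but $u^\rho$ does not, the difference of two values of $u$ on $\T_k$ has magnitude exactly $|\alpha_\rho|$ (with the unit normalisation of the shears). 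Since $u$ tends to zero at infinity this difference tends to zero as $k\to\infty$, forcing $\alpha_\rho=0$; the translational part is then handled as you do. Your final observation that finitely supported and square-summable flexes tend to zero at infinity is correct and completes the statement once the main claim is fixed.
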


\begin{proof}
Suppose that the infinitesimal flex $u$ of a parallelogram framework $\G$ tends to zero at infinity. Consider a free basis as in the previous theorem, normalised so that the non zero velocity vector for each $u_\rho$ has unit norm, and consider a representation
\[
u = \alpha_xu^x+\alpha_yu^y + \sum_{\rho\in \S}\alpha_\rho u^\rho.
\]
Consider a ribbon $\rho$ for which the coefficient $\alpha_\rho$ is nonzero. Let $T_1, T_2, \dots $ be a sequence of tiles of $\rho$ which lie on the distinct ribbons $\tau_1, \tau_2, \dots $. By our assumption,
the restriction of $u$ to the associated subframeworks $\T_k$ tends to zero. It follows that 
there exists $k$ such that the restriction of
$\alpha_\rho u^\rho +\alpha_{\tau_k} u^{\tau_k}$ to the 4 joints of $\T_k$ have magnitude less than $1$. Indeed the restriction of any other basis flex $u^\sigma$ is a translation flex of $\T_k$ and so, in view of the series representation, the restriction of
$\alpha_\rho u^\rho +\alpha_{\tau_k} u^{\tau_k}$ can be made arbitrarily close to a translation. On the other hand these restrictions have a zero velocity at one of the 4 joints of $\T_k$ and so the conclusion follows.
We now have a contradiction since each of the ribbon shears $u^{\tau_k}$ vanishes on a joint of $\T_k$ where $u^\rho$ has unit norm. 
\end{proof}

\begin{rem}
In Badri, Kitson and Power \cite{bad-kit-pow-2} it is shown by general infinite dimensional linear algebra that the infinitesimal flex space of every countable bar-joint framework $\G$ possesses free bases. However the proof is nonconstructive and for a specific framework the utility of a free basis or free spanning set comes from its  association with the geometry of $\G$.
\end{rem}

\subsection{When braced parallelogram frameworks are rigid}\label{ss:bracedparallelograms}
A \emph{braced parallelogram framework}, denoted $\G(\B)$, or $\G_P(\B)$, is a bar-joint framework obtained from a parallelogram bar-joint framework $\G_P$ by the addition of bars, known as braces, to parallelogram bar 4-cycles of $\G_P$. The set $\B$ denotes the set of these bars and it is assumed that at most one bar is added to any parallelogram bar 4-cycle. The associated \emph{braces graph}, denoted $G(\B)$, has vertex set labelled by the ribbons and edge set labelled by $\B$, or, equivalently, by the set of tiles $T$ whose bar 4-cycles are braced. Thus, the edge associated with $T$ is $(\rho, \sigma)$ where $\rho, \sigma$ are the ribbons through $T$. 

\begin{thm}\label{t:rigidbracing}
The braced parallelogram framework $\G(\B)$ is infinitesimally rigid if and only if $G(\B)$ is connected and spanning.
\end{thm}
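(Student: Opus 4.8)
The plan is to exploit the free basis $\S \cup \{u^x, u^y\}$ for $\F(\G_P)$ from Theorem \ref{t:shearbasis}, and to translate ``infinitesimal rigidity of $\G(\B)$'' into a statement about which linear combinations of ribbon shears survive when the brace bar constraints are imposed. First I would observe that $\G(\B) \supset \G_P$, so $\F(\G(\B)) \subseteq \F(\G_P)$; an infinitesimal flex of $\G(\B)$ is exactly an infinitesimal flex $z = \alpha_x u^x + \alpha_y u^y + \sum_{\rho\in\S}\alpha_\rho u^\rho$ of $\G_P$ which additionally satisfies the first-order flex condition on each brace bar. Since $u^x, u^y$ are translations they satisfy every brace condition automatically, so rigidity of $\G(\B)$ is equivalent to: the only choice of scalars $(\alpha_\rho)_{\rho\in\S}$ for which $\sum_\rho \alpha_\rho u^\rho$ satisfies all brace conditions is $\alpha_\rho = 0$ for all $\rho$ (modulo the fact that the rotation flex of $\G_P$ is itself a specific combination of ribbon shears and translations — I will need to check the rotation flex restricts correctly to ribbon shears, or simply note that $\G_P$ already carries its rotation flex so it is present in all cases and does not affect the argument).

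The key computation is local: for a tile $T$ with ribbons $\rho, \sigma$, what does the brace condition on $T$ say? I would compute the restriction $u^\rho|_T$: it is either zero (if $T$ lies on the ``$p_1$ side'' of $\rho$) or a fixed translation velocity $v_\rho$ (pointing in the direction of the ribbon edges crossing $\rho$, i.e.\ along the shared-edge direction of $\rho$). For a general flex $z$, the restriction $z|_T$ is a translation plus a multiple of the ``parallelogram shear'' of $T$ determined by the $u^\rho, u^\sigma$ terms; the brace bar (a diagonal of $T$) kills the shear component precisely when the coefficients of $u^\rho$ and $u^\sigma$ acting on $T$ satisfy one scalar linear relation. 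Carefully set up, the brace condition on the tile $T = (\rho,\sigma)$ becomes a relation of the form $c_\rho \alpha_\rho = c_\sigma \alpha_\sigma$ (or, after rescaling each $u^\rho$ appropriately, simply $\alpha_\rho = \alpha_\sigma$) — that is, the brace forces the two ribbons through $T$ to ``shear by the same amount.'' This is the crux and is exactly the mechanism behind the Bolker–Crapo condition: each brace identifies two vertices of $G(\B)$.

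With that local fact in hand the two directions are combinatorial. For sufficiency: if $G(\B)$ is connected and spanning, then starting from any flex $z = \sum \alpha_\rho u^\rho + (\text{translations})$ with all brace conditions satisfied, the relations $\alpha_\rho = \alpha_\sigma$ propagate along edges of $G(\B)$, forcing all $\alpha_\rho$ equal to a common value $\alpha$; I then argue that $\sum_{\rho\in\S}\alpha\, u^\rho = \alpha \sum_\rho u^\rho$ is (up to translation) the rotation flex of $\G_P$, hence $z$ is a rigid motion and $\G(\B)$ is infinitesimally rigid. (Identifying $\sum_\rho u^\rho$ with the rotation flex modulo translation is a pleasant geometric check: summing one unit shear per ribbon, with the base-point normalization, reconstitutes the global rotation.) For necessity: if $G(\B)$ is disconnected, pick a proper nonempty union of connected components, set $\alpha_\rho = 1$ for ribbons in those components and $\alpha_\rho = 0$ otherwise; all brace conditions hold (each brace lies within a single component), yet the resulting flex is not a combination of $\{u^x,u^y,\text{rotation}\}$ since it does not assign a common coefficient to all ribbons — so $\G(\B)$ has a non-rigid flex. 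If $G(\B)$ is not spanning, some ribbon $\rho_0$ is an isolated vertex, and $u^{\rho_0}$ alone satisfies every brace condition (it is a translation on every braced tile it meets, and braced tiles of $\rho_0$ carry no brace since $\rho_0$ is isolated), again a non-trivial flex. I expect the main obstacle to be the local brace computation — pinning down precisely the scalar relation a brace imposes on the two ribbon-shear coefficients, and in particular checking the nondegeneracy (that the coefficient of the shear term in $z|_T$ is a nonzero multiple of $\alpha_\rho - \alpha_\sigma$, using that the parallelogram is nondegenerate so its diagonal is not parallel to either edge direction) — together with the bookkeeping needed to handle the rotation flex cleanly rather than having it contaminate the ``$\alpha_\rho$ all zero'' statement.
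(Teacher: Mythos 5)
Your proposal is correct and follows essentially the same route as the paper: expand a flex of $\G(\B)$ in the free basis of Theorem \ref{t:shearbasis}, observe that a brace on a tile $T$ with ribbons $\rho,\sigma$ imposes a single nondegenerate linear relation $c_\rho\alpha_\rho=c_\sigma\alpha_\sigma$ (since a single nonzero shear plus a translation is never a rigid motion of the braced tile), propagate this along $G(\B)$, and in the disconnected case use the component-indicator combination of ribbon shears (which is exactly the paper's decomposition $u=u_1+u_2$ of the rotation flex). The only cosmetic difference is normalisation --- the paper subtracts a rigid motion so as to propagate zeros, whereas you rescale so as to propagate a common value; your rescaling is legitimised by expanding the rotation about $p_1$ in the free basis and noting all its coefficients are nonzero, which is the ``pleasant geometric check'' you correctly flag.
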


\begin{proof}
We first show sufficiency. Let $u$ be an infinitesimal flex of $\G(\B)$. Subtracting a rigid motion infinitesimal flex we may assume that the restriction of $u$ to the 4 joints of a braced tile subframework $\T_1$, for the tile $T_1$, is zero. We show that $u=0$. Considering the previous theorem, where $p_1$ a joint of $T_1$, we have a representation
 $u= \sum_{\rho\in\S}\alpha_\rho u^\rho$. Moreover, from the inductive specification of the coefficients in this sum we have $\alpha_\tau = \alpha_\sigma=0$ for the ribbons $\tau, \sigma$ through $T_1$.  Let $\T_2$ be a braced tile subframework for a tile $T_2$ which lies on  $\tau$. Then, in view of the brace, the restriction $u|_{\T_2}$ is a rigid motion infinitesimal flex. On the other hand, if $\kappa$ is the other ribbon through $T_2$ then the restriction of $\sum_{\rho\in\S}\alpha_\rho u^\rho$ to $\T_2$ is equal to the restriction of $\alpha_\kappa u^\kappa$ plus an infinitesimal translation. It follows that $\alpha_\kappa=0.$ Since the braces graph is connected and spanning it follows that $\alpha_\sigma = 0$ for every ribbon $\sigma$, as required.
 
Note that if $G(\B)$ is not spanning then there is a ribbon $\rho$ with no associated braces and in this case $u^\rho$ is a nontrivial (non rigid motion) infinitesimal flex of $\G(\B)$. 

It remains to show that if $G(\B)$ is not connected then $\G(\B)$ has a nontrivial infinitesimal flex. Assume then that the braces graph is a disjoint union of 2 graphs, with an associated proper partition of the ribbons, $\R = \R_1\cup \R_2$, and the braces,
$\B = \B_1\cup \B_2$. Let $u$ be an infinitesimal flex with $u(p_1)=0$. By Theorem \ref{t:shearbasis} there exists a unique representation 
\[
u=u_1 +u_2, \quad 
u_1=\sum_{\rho\in\R_1}\alpha_\rho u^\rho, \quad 
u_2= \sum_{\rho\in\R_2}\alpha_\rho u^\rho.
\]
We first show that in view of the partitioning the velocity field $u_1$, and therefore $u_2$ also, is an infinitesimal flex of $\G(\B)$. Let  $\T$ be the bar 4-cycle for a tile $T$. If $T\in \B_1$ then every shearing flex $u^\rho$ for $\rho \in \R_2$ restricts to an infinitesimal translation of $\T$. Thus $u_2$ restricts to an infinitesimal translation of $\T$ and so $u_1=u-u_2$ restricted to $\T$ is an infinitesimal flex of $\T$. On the other hand if $T\in \B_2$ then, similarly, $u_1$ restricts to a infinitesimal translation of $\T$. It follows that $u_1$ is an infinitesimal flex of $\G(\B)$.

To obtain the desired contradiction assume further that $u$ is a nonzero infinitesimal rotation with $u(p_1)=0$.
Consider a pair $\rho_1\in \R_1, \rho_2\in \R_2$ with common tile $T$. Since the restriction of $u$ to the corresponding 4-joint framework $\T$ is an infinitesimal rotation it follows that both $\alpha_{\rho_1}$ and $\alpha_{\rho_2}$ are nonzero. This is because the restriction of every $u^\rho$ with $\rho \neq \rho_1, \rho_2$ is an infinitesimal translation. Thus the restriction $u_1|\T$ is not a rigid motion infinitesimal flex of $\T$ and so neither is $u_1$ itself.
\end{proof}

\begin{rem}
It should be clear from the proofs that Theorem \ref{t:shearbasis} and Theorem \ref{t:rigidbracing} have counterparts for a finite parallelogram framework which is a maximal patch with simply connected boundary, where maximal means that there are no triple or quadruple of joints on the boundary which are the joints of a tile not in the patch.
Indeed in the inductive step from patch $\P$ to  patch $\P^+$ in Theorem \ref{t:shearbasis} there is a free choice of contiguous tile $T$ to add to $\P$. Moreover, for the same reason, the proof also applies to infinite patches which are simply connected and have the maximal property.

In their combinatorial approach to the finite case Graseggar and Legersk\'y \cite{gra-leg} consider a wider class of parallelogram frameworks, allowing crossing bars and nonplanar underlying structure. Their requirement on the underlying graph is that every ribbon is an edge cut in the sense that removing
the edges of the ribbon makes the graph disconnected.

Recently we have obtained characterisations of rigidity for finite and infinite braced grids 
in the plane with respect to some non-Euclidean norms \cite{pow-nonEuc}. It would be interesting to determine corresponding characterisations for braced parallelogram frameworks and general norms.
\end{rem}

\subsection{Linearly localised flexes}
A subset of vertices (resp. joints) of an embedded graph (resp. bar-joint framework) in $\bR^2$ is said to be \emph{linearly localised}, or $H$-localised, if every vertex (resp. joint) of the subset is located within a fixed distance of a line $H$ through the origin. Also a velocity field for a bar-joint framework in the plane is said to be $H$-localised if its support is $H$-localised.

\begin{thm}\label{t:bandltd_for_P}
Let $\G_P$  be a parallelogram bar-joint framework, with a choice of ribbon shears $u^\rho$ relative to a particular  base joint, and let $u$ be a velocity field. If
 $u$ is a nonzero $H$-localised infinitesimal  flex then it is a finite sum $u=\sum_{\rho  \in F} \alpha_\rho u^\rho$, over $H$-localised ribbons.
\end{thm}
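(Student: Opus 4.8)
The plan is to apply Theorem \ref{t:shearbasis} to expand $u$ in the free basis $\S \cup \{u^x, u^y\}$ and then show that (i) no translation terms occur and (ii) only finitely many ribbon terms occur, all of which correspond to $H$-localised ribbons. First I would write $u = \alpha_x u^x + \alpha_y u^y + \sum_{\rho \in \S} \alpha_\rho u^\rho$. Since $u$ is $H$-localised, it vanishes on joints arbitrarily far from $H$; on the other hand $\alpha_x u^x + \alpha_y u^y$ is a constant velocity vector, so on any tile $\T_k$ far from $H$ that lies on a ribbon crossing a fixed ribbon $\rho$, the argument from the proof of Corollary \ref{c:novanishingflexes} shows that each $\alpha_\rho$ with $\rho$ not $H$-localised must be zero: if such a ribbon $\rho$ had $\alpha_\rho \neq 0$, pick tiles $T_k$ on $\rho$ with $\rho$ not $H$-localised (so the $T_k$ escape every neighbourhood of $H$), note that on $\T_k$ the field $\alpha_\rho u^\rho + \alpha_{\tau_k} u^{\tau_k}$ differs from $u|_{\T_k}$ by a translation, hence (since $u|_{\T_k} \to 0$) it is arbitrarily close to a translation; but it has a zero velocity at a joint of $\T_k$ where $u^\rho$ has a fixed nonzero velocity, forcing a contradiction. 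The same device, applied with a tile on $\rho$ far from $H$, also kills $\alpha_x$ and $\alpha_y$ (equivalently: a ribbon shear $u^\rho$ for an $H$-localised $\rho$ is itself not $H$-localised on the ``nonzero'' side, so a sum of finitely many such cannot be $H$-localised unless their nonzero velocity vectors cancel, and the translation part would then have to vanish too — this needs a short separate check).

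After this reduction we know $u = \sum_{\rho \in F'} \alpha_\rho u^\rho$ where $F'$ consists of $H$-localised ribbons with $\alpha_\rho \neq 0$, and it remains to show $F'$ is finite. Here the key geometric observation is that an $H$-localised ribbon $\rho$ lies within a bounded distance of $H$, so its constituent tiles are confined to a strip around $H$; the edges shared within $\rho$ all have a common direction, and a ribbon that stays in a bounded strip must have that common direction transverse to $H$ (a ribbon whose shared edges are parallel to $H$ would drift off to infinity in a direction perpendicular to $H$ only if... — actually one must argue the ribbon's ``width direction'' is bounded, which forces the ribbon direction to be close to the perpendicular of $H$). Two distinct $H$-localised ribbons cross (by the strip geometry two such ribbons, both transverse to $H$ and both meeting a common bounded portion of the strip, must share a tile), and by Lemma \ref{l:2ribbons} they share exactly one tile. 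If there were infinitely many ribbons in $F'$, fix one, say $\rho_0 \in F'$; the tiles of $\rho_0$ escape to infinity along $\rho_0$, but each other $\rho \in F'$ meets $\rho_0$ in a single tile, so all but finitely many tiles of $\rho_0$ lie on no other ribbon of $F'$. On such a ``lonely'' tile $T$ of $\rho_0$, every $u^\rho$ with $\rho \in F' \setminus \{\rho_0\}$ restricts to a translation of $\T$, while $u^{\rho_0}$ has a zero velocity at one joint of $\T$; reconstructing $u$ on $\T$ from the finite sum then forces $u|_{\T}$ to have a specific nonzero pattern there — but the lonely tiles also eventually leave every bounded strip (since $\rho_0 \in F'$ is $H$-localised, hmm — here instead I use that $u$ is $H$-localised and $\rho_0$ is $H$-localised, so I need a slightly different final contradiction).

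Let me restructure the finiteness argument more carefully along the lines of the last paragraph of the proof of Theorem \ref{t:shearbasis}: suppose $F'$ is infinite. Because each $H$-localised ribbon is confined to a bounded strip about $H$ and (as above) is transverse to $H$, any two distinct ribbons in $F'$ cross, hence share exactly one tile. Fix $\rho_0 \in F'$. For any $\rho \in F' \setminus \{\rho_0\}$, let $T_\rho$ be the unique common tile of $\rho_0$ and $\rho$, and let $\T_\rho$ be its $4$-bar framework. On $\T_\rho$, every $u^{\sigma}$ with $\sigma \in F' \setminus \{\rho_0, \rho\}$ restricts to an infinitesimal translation, and $u^{\rho_0}, u^\rho$ each vanish at a joint of $\T_\rho$; exactly as in the final paragraph of Theorem \ref{t:shearbasis}'s proof, this shows $u|_{\T_\rho}$ differs from $\alpha_\rho u^\rho|_{\T_\rho}$ by a translation, and since $\alpha_\rho \neq 0$ and $u^\rho|_{\T_\rho}$ is not a translation, $\|u\|$ is bounded below by a positive constant (depending only on the finitely many edge-directions present) at some joint of $\T_\rho$. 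As $\rho$ ranges over the infinite set $F' \setminus \{\rho_0\}$, the tiles $T_\rho$ are distinct tiles of $\rho_0$ and hence escape to infinity along $\rho_0$; so $u$ is bounded away from zero on joints going to infinity, contradicting $H$-localisation (equivalently contradicting that $u$ has an $H$-localised support, since the $T_\rho$ leave every bounded neighbourhood of the finitely many points of $\rho_0 \cap H$). Hence $F'$ is finite and we are done.

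\textbf{Main obstacle.} The main obstacle is the geometric step asserting that an $H$-localised ribbon is genuinely ``transverse'' to $H$ and, more importantly, that two distinct $H$-localised ribbons must cross — this is what allows the single-common-tile dichotomy of Lemma \ref{l:2ribbons} and the escape-to-infinity argument to bite. Ruling out degenerate configurations (e.g. two ribbons in the strip that run nearly parallel to each other and never meet, or a ribbon that zig-zags within the strip) requires using the rigidity of the parallelogram structure: once the shared-edge direction of a ribbon is fixed, consecutive tiles translate along a fixed direction, so the ribbon is eventually a ``straight'' strip, and boundedness in the $H^\perp$ direction pins down that direction up to the finitely many tile-edge slopes. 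I would isolate this as a short geometric lemma before the main argument.
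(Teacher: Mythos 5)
Your first step --- expanding $u$ in the free basis of Theorem \ref{t:shearbasis} and killing the coefficient $\alpha_\rho$ of every non-$H$-localised ribbon by restricting to a tile of $\rho$ lying outside the support of $u$ --- is essentially the paper's argument: on such a tile the restriction of $u$ is zero, and since the restriction of any flex to a tile's 4-bar subframework $\T$ decomposes uniquely as a translation plus a combination of the two ribbon shears through that tile, both of those coefficients must vanish. (The elimination of $\alpha_x,\alpha_y$, which you flag as ``a short separate check'', is indeed easy once finiteness is known: evaluate $u$ at joints outside its support on either side of the band containing the surviving ribbons.)

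The finiteness step is where your proof genuinely breaks. It rests on the claim that any two distinct $H$-localised ribbons must cross, and this is false: in the square-grid framework $\C_{\bZ^2}$ with $H$ the $x$-axis, every horizontal row of squares is an $H$-localised ribbon and these ribbons are pairwise disjoint; moreover $u^{\rho_1}-u^{\rho_2}$ for two such rows is a nonzero $H$-localised flex whose set $F$ consists of two non-crossing ribbons, so the configuration you are trying to rule out actually occurs. Even granting the crossing claim, your concluding contradiction misreads the definition: the common tiles $T_\rho$ escape to infinity \emph{along} $\rho_0$ and therefore stay within a bounded distance of the line $H$; a flex bounded below on such joints is perfectly compatible with being $H$-localised, since localisation is measured by distance to the line $H$, not to the bounded set $\rho_0\cap H$. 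The paper's finiteness argument is different and much shorter: if $\alpha_\rho\neq 0$ then, by the tile argument above, every tile of $\rho$ must meet the support of $u$, so all surviving ribbons lie in one \emph{fixed} band of finite width about $H$; a finite path of contiguous tiles joining two tiles on opposite sides of that band must pass through a tile of every ribbon contained in it (each such ribbon separates the two endpoints of the path), and since each tile lies on exactly two ribbons this bounds the number of surviving ribbons. You should replace your crossing/escape argument with this separation argument.
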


\begin{proof} 
By Theorem \ref{t:shearbasis} an  infinitesimal flex  $u$ has an infinite sum representation \\ $u= \alpha_xu^x+\alpha_yu^y+\sum_\rho \alpha_\rho u^\rho$. 
Let $T$ be any tile with ribbons $\sigma, \tau$ and associated 4-bar framework $\T$. The restriction of any infinitesimal flex to $\T$ is a sum of an infinitesimal  translation and a unique linear combination of the restrictions of $u^\sigma$ and  $u^\tau$. Also the restriction of other ribbon shears to $\T$ are translation flexes. Thus, if the restriction of $u$ to $\T$ is zero, then $\alpha_\sigma = \alpha_\tau =0$.

Suppose now that $u$ is $H$-localised. If the ribbon $\rho$ is not $H$-localised then it travels outside the support of $u$ and from the previous paragraph $\alpha_\rho=0$.
It remains to show that the number of ribbons contained in a band of finite width is finite. This follows from our standing assumptions for $P$. These imply that there is a finite path of tiles which connects a pair of tiles on opposite sides of the ribbon.
\end{proof}

 For an $H$-localised ribbon $\rho$ in a general parallelogram tiling $P$ we may assign a unit vector {\bf n}$_{\rho}$ normal to  $H$, a unit vector {\bf m}$_{\rho}$ in the direction of the internal edges, and a unit vector {\bf t}$_{\rho}$ orthogonal to {\bf m}$_{\rho}$. These vectors are determined up to sign, and ${\bf t}_{\rho}$ or $-{\bf t}_{\rho}$ is the flex direction for all the nonzero velocities of a ribbon shear $u^\rho$ of $\G_P$. See Figure \ref{f:ribbonPartB}. 
We refer to  {\bf t}$_{\rho}$ and  $-{\bf t}_{\rho}$ as  \emph{tangential flex vectors} for the ribbon $\rho$.

\begin{center}
\begin{figure}[ht]
\centering
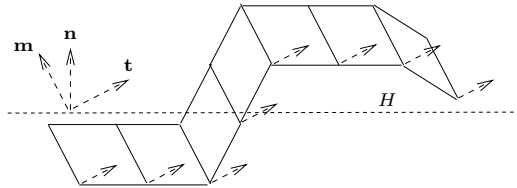
\caption{An $H$-localised ribbon $\rho$ with associated unit vectors {\bf n}$_{\rho}$, {\bf m}$_{\rho}$ and {\bf t}$_{\rho}$.
}
\label{f:ribbonPartB}
\end{figure}
\end{center}



The following notation is useful for the next definition. For a line $H$ through the origin and $c>0$ let $U(H,c), V(H,c)$ be the closed connected sets whose union is the set of points whose distance to $H$ is at least $c$. 

\begin{definition}\label{d:bulkshear}
Let $\G$ be a countable bar-joint framework in $\bR^2$. 
A  \emph{bulk shear}, or \emph{$H$-localised bulk shear}, of  $\G$ is an infinitesimal flex $u$ with the following property. There are distinct velocity vectors $a, b$ in $\bR^2$ and a pair $H, c$ such that if $p_i\in U(H,c)$ (resp. $p_i\in V(H,c)$) then $u(p_i) = a$ (resp. $u(p_i) = b$). 
\end{definition}

\begin{thm}\label{t:bulkshear_for_P}
Let $\G_P$  be a parallelogram bar-joint framework with a choice of ribbon shears $u^\rho$ relative to a particular  base joint. Then a velocity field $u$ of $\G_P$ is an $H$-localised bulk shear if and only if it is a sum of a translational infinitesimal flex and a finite sum $\sum_{\rho  \in F} \alpha_\rho u^\rho$, over a set $F$ of $H$-localised ribbons.  
\end{thm}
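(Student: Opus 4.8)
The plan is to prove the two implications separately, the reverse one (that a bulk shear has the stated form) being the substantive direction and running closely parallel to the proof of Theorem~\ref{t:bandltd_for_P}, with ``supported within a band about $H$'' replaced by ``constant on each of $U(H,c_0)$ and $V(H,c_0)$''. For the forward direction I would argue directly: if $u = w + \sum_{\rho\in F}\alpha_\rho u^\rho$ with $w$ a translational infinitesimal flex and $F$ a finite set of $H$-localised ribbons, then each ribbon shear $u^\rho$ with $\rho$ $H$-localised is supported within a fixed band about $H$, so once $c$ exceeds the widths of the finitely many relevant bands, $u^\rho$ is constant on $U(H,c)$ (taking the value $0$ or its shear vector there) and constant on $V(H,c)$. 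Adding the finitely many such terms and the globally constant field $w$, one finds that $u$ is constant, say equal to $a$, on $U(H,c)$ and constant, say equal to $b$, on $V(H,c)$; when $a\ne b$ this is precisely an $H$-localised bulk shear.

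For the reverse direction, let $u$ be an $H$-localised bulk shear with data $(H,c_0)$, so $u\equiv a$ on $U:=U(H,c_0)$ and $u\equiv b$ on $V:=V(H,c_0)$ with $a\ne b$. First I would invoke Theorem~\ref{t:shearbasis} to write $u=\alpha_xu^x+\alpha_yu^y+\sum_\rho\alpha_\rho u^\rho$, and set $w_0=\alpha_xu^x+\alpha_yu^y$; it then suffices to prove that $\F_0:=\{\rho:\alpha_\rho\ne 0\}$ is finite and consists of $H$-localised ribbons, for then $u=w_0+\sum_{\rho\in\F_0}\alpha_\rho u^\rho$ has the required form. The key mechanism is the one from the proof of Theorem~\ref{t:bandltd_for_P}: for a tile $T$ with ribbons $\sigma,\tau$ and $4$-bar framework $\T$, the restriction $u|_\T$ is an infinitesimal translation plus a unique linear combination of $u^\sigma|_\T$ and $u^\tau|_\T$, while every other basis flex restricts to a translation of $\T$; hence if $u|_\T$ is itself a translation then $\alpha_\sigma=\alpha_\tau=0$. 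Since $u$ is the constant $a$ on $U$, this applies to every tile all of whose joints lie in $U$, and likewise on $V$; so no ribbon in $\F_0$ has a tile lying wholly inside $U$ or wholly inside $V$.

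It then remains to deduce that each $\rho\in\F_0$ is $H$-localised and that only finitely many such $\rho$ occur. A ribbon that is not $H$-localised must, exactly as the non-$H$-localised ribbons ``travel outside the support'' in the proof of Theorem~\ref{t:bandltd_for_P}, contain a tile lying wholly in $U$ or wholly in $V$ --- using that signed distance to $H$ varies affinely along each straight edge, together with the standing assumptions on $P$ --- which contradicts the previous step; so every $\rho\in\F_0$ is $H$-localised, and in fact $\F_0$ lies in a single band of finite width. Finiteness of $\F_0$ then follows from the fact, established at the end of that same proof, that only finitely many ribbons lie in a band of finite width. The hard part will be this last reduction, namely extracting from a non-$H$-localised ribbon an actual $4$-bar subframework sitting wholly on one side of the band; this is precisely the geometric point about parallelogram ribbons already invoked in Theorem~\ref{t:bandltd_for_P}, and it is where the local finiteness and connectivity assumptions on $P$ enter.
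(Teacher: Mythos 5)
Your proposal is correct and follows essentially the same route as the paper: represent $u$ via the free basis of Theorem \ref{t:shearbasis}, observe that on any tile lying wholly in $U(H,c)$ or $V(H,c)$ the restriction of $u$ to the bar 4-cycle is a translation so the coefficients of the two ribbons through that tile vanish, and conclude that the surviving ribbons are $H$-localised and finite in number (the paper, like you, leans on the standing assumptions and on the mechanism of Theorem \ref{t:bandltd_for_P} for these last two points, and leaves the converse implicit where you spell it out and rightly flag the degenerate case $a=b$). One small wording slip: a ribbon shear $u^\rho$ for an $H$-localised ribbon is \emph{not} ``supported within a fixed band about $H$'' --- its support is the whole set of joints on one side of the ribbon --- but your parenthetical shows you are actually using the correct fact, namely that $u^\rho$ is constant (equal to $0$ or to its shear vector) on each of $U(H,c)$ and $V(H,c)$ once $c$ is large enough.
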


\begin{proof}
Let $u$ be an $H$-localised bulk shear, with vector pair $a,b$ for the sets  $U(H,c), V(H,c)$.
By Theorem \ref{t:bandltd_for_P} $u$ has a representation $u_{\rm tr} + \sum_{\rho} \alpha_\rho u^\rho$, where $u_{\rm tr}$ is an infinitesimal translation. 
Let $T$ be a tile lying in $U(H,c)$ or $V(H,c)$. Then the restriction of $u$ to the associated bar 4-cycle $\T$ is a translation. As in the proof of Theorem \ref{t:bandltd_for_P}, it follows that $\alpha_\rho =0$ if the ribbon $\rho$ passes through $T$. Thus the summation for $u$ is the same as the summation over $H$-localised ribbons. Once again, by our standing assumptions for $P$, the sum is finite.
\end{proof}

\begin{center}
\begin{figure}[ht]
\centering
\includegraphics[width=4.2cm]{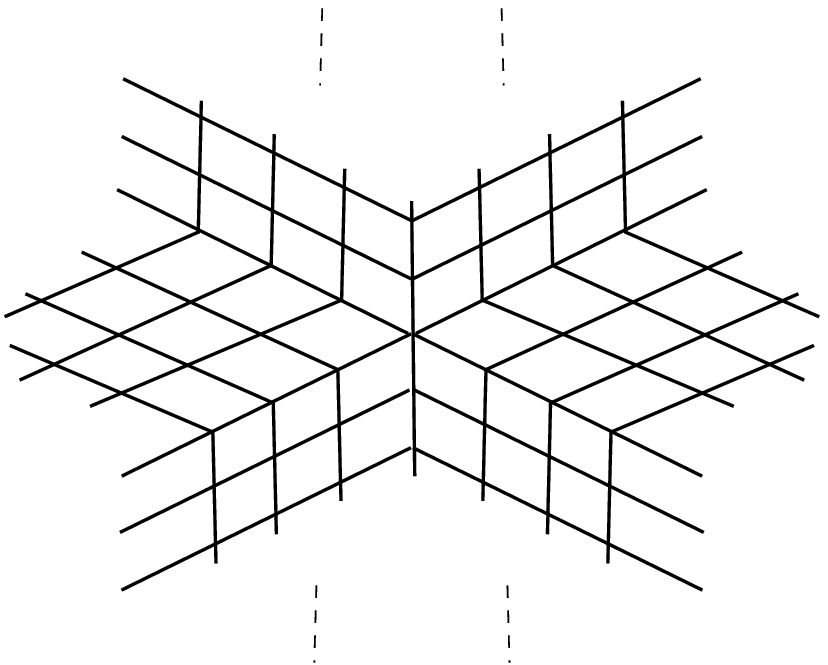}\quad \quad \quad \includegraphics[width=5cm]{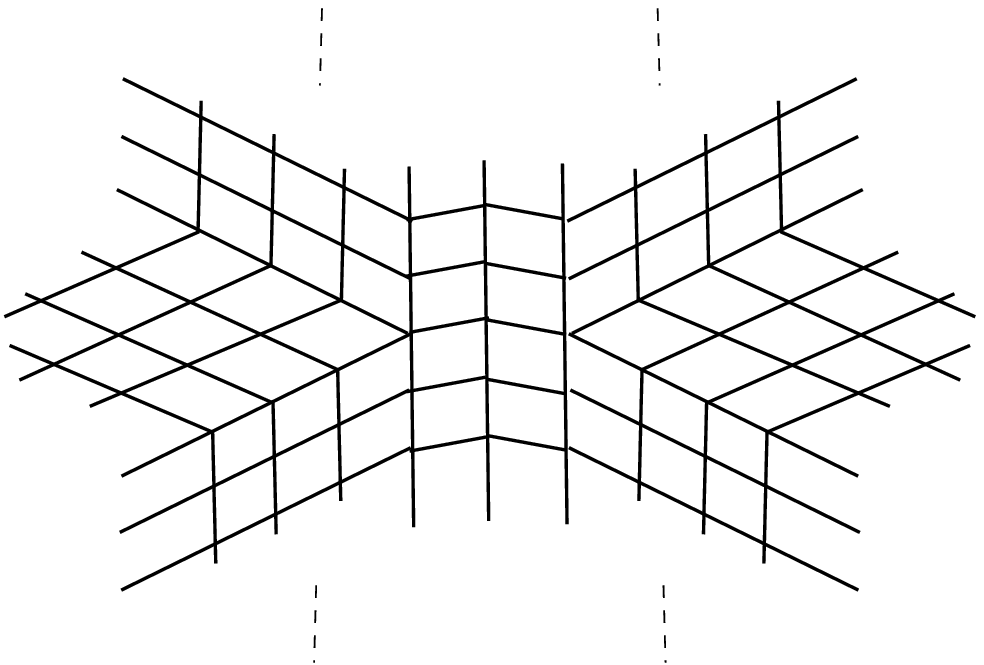}
\caption{Tilings with (a) no linearly localised ribbons, (b) two linearly localised ribbons.}
\label{f:bentribbons}
\end{figure}
\end{center}

We now define two elementary geometric invariants for a general countable bar-joint framework. These are particularly appropriate for parallelogram frameworks.

\begin{definition}\label{d:linefigures}
Let $\G$ be a countable bar-joint framework in $\bR^d$, for $d\geq 2$. 

(i) The \emph{linear flex figure}, LFF$(\G)$, is the union of the set of lines $H$ through the origin for which there exists a nonzero $H$-localised infinitesimal flex. 

(ii)  The \emph{bulk shear figure}, BSF$(\G)$, is the union of the set of lines $H$ through the origin for which there exists a nonzero $H$-localised bulk shear. 
\end{definition}

Note that if $\G$ has a locally supported infinitesimal flex then  LFF$(\G)= \bR^2$. 
On the other hand
Figure \ref{f:bentribbons} indicates parallelogram tilings with few linearly localised ribbons, the first having none and the second with two, say $\rho, \sigma$. In fact the second tiling gives a bar-joint framework $\G_P$ with no linearly localised infinitesimal flexes, since the tangential flex vectors {\bf t}$_\rho$ and {\bf t}$_\sigma$ are different. Nevertheless $\G_P$ has $H$-localised bulk shears and BSF$(\G_P)$ is equal to a vertical line through the origin.

 \section{Multigrid frameworks}\label{s:multigridG}
We now define parallelogram tilings $P$ that are associated in a dual way with sets of lines. In the case of those determined by multigrids it is shown that the ribbons are linearly localised and an explicit recipe is given for the \emph{ribbon figure} RF$(P)$, which is defined as the union of lines through the origin that record their directions. Also we identify various quasicrystal frameworks whose infinitesimal flex spaces are finite dimensional.

\subsection{Dual parallelogram tilings}
\label{ss:PfromPstar}

Consider a tiling $P_*$ of the plane by convex polygons that arises from a countable set of lines where no more than 2 lines intersect at a given point.  
We assume moreover that the angles of intersection of the lines, and the distances between parallel lines are bounded away from zero. 
We call such a tiling a \emph{regular line tiling} and it follows that the lines fall into finitely many equivalence classes of parallel lines. From $P_*$ we may construct parallelogram tilings $P$ by the following scheme, where $P_*$ represents a plan for the construction of tiles of $P$  from the vertices of $P_*$, and where the ribbons of $P$ correspond to the lines of $P_*$.

Start the construction with a choice of base face, $f_*$ say, of the regular line tiling $P_*$ and a choice of positive \emph{length} or \emph{weight}, $\lambda(H)$, for each line $H$ through the origin representing a class of parallel lines of $P_*$.
If the face $f_*$ has $n$ edges then associate with it a base vertex $v$ (in a another copy of $\bR^2$) together with $n$ incident edges where these edges are orthogonal to the directions $H$ of the edges of $f_*$ and have lengths $\lambda(H)$. From this star graph, denoted $S(v)$, add pairs of edges to consecutive pairs of edges  to create $n$ parallelograms incident to $v$. In this way we create $n$ new vertices, $v_1, \dots ,v_n$. These vertices correspond to the faces of $P_*$ which are contiguous to $f_*$, say  $f_1^*, \dots ,f_n^*$. Thus, the vertex $v$ for the base face $f_*$ defines a base patch $P(v,1)$ which is determined in 2 steps, namely, the addition of edges to $v$, to create its star graph, followed by parallelogram completion. 

This 2-step construction can be repeated for the new vertices; 
the base patch is enlarged by completing the star graph $S(v_1)$ for  $v_1$ (some edges are already determined) together with its parallelogram completion. The result can be viewed as the join of the patch $P(v,1)$ with the patch $P(v_1,1)$  over their common edges. It follows from this that the process of star patch addition can 
be continued in a unique way  for the vertices $v_2, \dots , v_n$ (in any order). In view of our assumptions for $P_*$ there is a lower bound for the areas of constructed parallelograms and so the construction can be continued to create a unique parallelogram tiling $P$.   Specifically we may choose the order of star patch additions to follow an exhaustive enumeration of the faces of $P_*$ where consecutive faces of the sequence are adjacent. We denote the parallelogram tiling $P$, which is determined uniquely up to translation, as $P(P_*,\lambda)$.

\begin{definition}\label{d:dualparallelogramtiling}
A \emph{dual parallelogram tiling} is a parallelogram tiling of the form $P(P_*,\lambda)$ for some regular line tiling $P_*$.
\end{definition}


Not all parallelogram tilings have preduals which are regular line tilings. This is the case for the tiling suggested by Figure \ref{f:ribbontwist}, where a crossing pair of ribbons lies between a noncrossing pair. On the other hand one can view general parallelogram tilings as constructed in a similar dual way relative to a network of curves running through the ribbons \cite{bee}.

\begin{center}
\begin{figure}[ht]
\centering
\includegraphics[width=5cm]{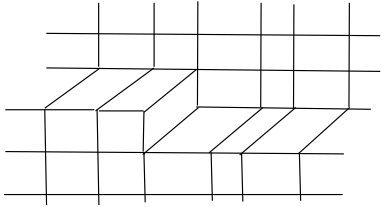}
\caption{Part of a parallelogram tiling $P$ that is not dual to a regular line tiling.}
\label{f:ribbontwist}
\end{figure}
\end{center}

\subsection{Multigrid quasicrystals}\label{ss:multigridQCs}
A significant family of dual parallelogram tilings arises from multigrids. The following definitions are convenient.

\begin{definition}\label{d:multigridtilings}
(i) A \emph{multigrid} in $\bR^2$ is a set of lines 
$
 A_1(\L)\cup \dots \cup A_r({\L}) 
$
where $\L$ is the set of lines parallel to the $y$-axis, with integer intercepts on the $x$-axis, and where $A_1, \dots , A_r$ are affine automorphisms of $\bR^2$. A multigrid is \emph{regular} if the component grids $A_i({\L})$ have distinct directions and every triple of lines has an empty intersection.
 
(ii) A \emph{multigrid parallelogram tiling} is a parallelogram tiling that is dual to a multigrid.
 
\end{definition}



Define a \emph{regular de Bruijn pentagrid} to be the regular multigrid given by  $A_1,\dots , A_5$ where 
$A_k(z) = R_kz+(\gamma_k,0), z\in \bR^2,$ where $R_k$ is clockwise rotation by  $2\pi (k-1)/5$, and where $\gamma_1+\dots +\gamma_5=0$. The associated multigrid parallelogram tilings, with rhombic parallelograms, are examples of Penrose tilings. For these tilings $P$ the ribbon figure RF$(P)$ consists of 5 lines through the origin parallel to the 5 individual grid lines of the pentagrid.
\begin{center}
\begin{figure}[ht]
\centering
\includegraphics[width=7cm]{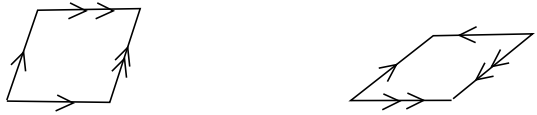}
\caption{Two arrow-decorated rhombs, with interior angles $72^\circ, 36^\circ$.}
\label{f:2tiles}
\end{figure}
\end{center}

To be more precise about the nature of Penrose tilings, consider the thick and thin rhomb tiles of Figure \ref{f:2tiles}, with base edges parallel to the $x$-axis, each with an arrow decoration. Two further decorations arise by rotation by $\pi$ and a totality of 20 decorated rhomb tiles is obtained by including the 16 further decorated tiles obtained by rotations by  $2(k-1)\pi/5$, for $2\leq k\leq 5$. 

\begin{definition}\label{d:penrosetiling}
A \emph{decorated Penrose rhomb tiling}, or arrowed rhomb tiling (AR tiling), is a decoration-matched tiling of the plane by thick and thin decorated rhombs. A \emph{Penrose rhomb tiling} is a tiling obtained by undecorating an AR tiling.
\end{definition}

In 1981 de Bruijn \cite{deB} defined Penrose rhomb tilings in this way and showed how they are related to  (possibly nonregular) pentagrid parallelogram tilings. 
Also he obtained fundamental properties, such as aperiodicity, local isomorphism and substitution rules, and showed the connection with the projection method construction via lattices in $\bR^5$. Penrose's original inflation construction gives a particular subset of the AR tilings. An overview of de Bruijn's work and connections with physical quasicrystals and diffraction is given by Au-Yang and Perk \cite{au-per}.

Beenker \cite{bee} extended de Bruijn's algebraic methods to tetragrids, that is, to 4-fold grids. These multigrids give rise in particular to the so-called Ammann-Beenker tilings, or octagonal tilings, with 2 tile types up to congruency, namely a square and a $\pi/4$ rhombus. Despite the simplicity of their construction there are 41 different types of vertex up to translation. 
Socolar \cite{soc} subsequently gave a detailed geometric analysis including the decagonal case, with tilings by hexagons, squares and $\pi/12$ rhombs.  
Moreover, Gahler and Rhyner \cite{gah-rhy} have given the equivalence between a somewhat more general construction of parallelogram tilings for multigrids and the projection method construction. 
\medskip

The following observations follow from  Theorem \ref{t:rigidbracing}.

\begin{cor}
(i) Let $P$ be a regular Penrose rhomb tiling and let $\B$ be the set of thick tiles or the set of thin tiles. Then the braced framework $\G_P(\B)$, obtained from $\G_P$ by bracing the tiles of $\B$, is infinitesimally rigid. (ii) Let $P$ be an Ammann-Beenker tiling and let $B$ be the set of square tiles. Then the braced framework $\G(\B)$ is infinitesimally flexible.
\end{cor}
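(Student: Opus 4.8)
The plan is to compute the braces graph $G(\B)$ explicitly in each case and then quote Theorem~\ref{t:rigidbracing}: $\G(\B)$ is infinitesimally rigid if and only if $G(\B)$ is connected and spanning. So the whole content is a combinatorial reading of $G(\B)$ off the grid of which the tiling is dual, together with the de Bruijn--Beenker duality facts.

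For (i) I interpret a \emph{regular Penrose rhomb tiling} as one dual to a regular de Bruijn pentagrid; its ribbons then fall into five infinite classes $F_0,\dots,F_4$, where $F_k$ consists of the ribbons dual to the pentagrid lines in the $k$-th direction $2\pi k/5$. The input I would take from de Bruijn \cite{deB} is the duality between the vertices of $P_*$ and the tiles of $P$: regularity (distinct directions, no three lines concurrent) forces any line of class $i$ to meet any line of class $j\ne i$ in exactly one point, and that point is dual to a single rhomb of $P$ whose pair of edge directions is determined by the pair $\{i,j\}$. Hence in $G(\B)$ two ribbons $\rho\in F_i$, $\sigma\in F_j$ with $i\ne j$ are adjacent precisely when the rhomb of type $\{i,j\}$ belongs to $\B$, which depends only on the unordered pair. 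A short angle computation (the angle of the rhomb equals the angle $2\pi|i-j|/5$ between the two grid directions, taken modulo $\pi$) shows the thick rhomb ($72^\circ/108^\circ$) is the one with $i-j\equiv\pm1\pmod 5$ and the thin rhomb ($36^\circ/144^\circ$) the one with $i-j\equiv\pm2\pmod 5$.

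Therefore, if $\B$ is the set of thick tiles, $G(\B)$ is exactly the blow-up of the $5$-cycle on the classes $F_0,\dots,F_4$ (consecutive classes adjacent, indices mod $5$): every ribbon of $F_i$ is joined to every ribbon of $F_{i\pm1}$ and to no other ribbon. Since each $F_k$ is nonempty and each pair of ribbons in consecutive classes does cross, $G(\B)$ is connected (step around the cycle of classes) and spanning (each ribbon meets a ribbon of an adjacent class), so Theorem~\ref{t:rigidbracing} gives infinitesimal rigidity. If instead $\B$ is the set of thin tiles, $G(\B)$ is the blow-up of the cycle $F_0,F_2,F_4,F_1,F_3$ (stepping by $2$ mod $5$); since $\gcd(2,5)=1$ this is again a single $5$-cycle, and the same reasoning applies. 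For (ii), the Ammann--Beenker tiling is dual to a regular tetragrid, so its ribbons split into four infinite classes $F_0,\dots,F_3$ at directions $\pi k/4$, and by the identical duality and angle check the square tiles are exactly those whose two ribbon classes differ by $2\pmod 4$. Thus in $G(\B)$ edges occur only within $F_0\cup F_2$ and within $F_1\cup F_3$; as both parts are nonempty, $G(\B)$ is disconnected, and Theorem~\ref{t:rigidbracing} (equivalently, the nontrivial flex exhibited in its proof for the disconnected case) shows $\G(\B)$ has a nontrivial infinitesimal flex, i.e.\ it is infinitesimally flexible.

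The routine ingredients are the two angle computations matching tile types to unordered pairs of grid directions, and the elementary fact that the blow-up of a connected (resp.\ disconnected) graph along nonempty classes is connected (resp.\ disconnected), so that connectedness and spanning of $G(\B)$ reduce to the five (resp.\ four) classes being nonempty. The one point that needs care, and the main obstacle, is stating the grid/tiling duality cleanly: that for a regular pentagrid or tetragrid the tiles of $P$ biject with the intersection points of the grid, with tile shape a function of the unordered pair of directions through that point, and that every direction class is actually represented so $G(\B)$ has no isolated classes. This is precisely the content of de Bruijn's \cite{deB} and Beenker's \cite{bee} analyses, which I would cite rather than reprove.
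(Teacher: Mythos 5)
Your proposal is correct and follows essentially the same route as the paper: both reduce the statement to Theorem~\ref{t:rigidbracing} and read off the braces graph from the grid-line duality, with thick/thin (resp.\ square) tiles corresponding to pairs of grid directions at the appropriate angles. Your explicit description of $G(\B)$ as a blow-up of a $5$-cycle (connected since $\gcd(2,5)=1$) versus a disjoint union $F_0\cup F_2$, $F_1\cup F_3$ is just a more detailed phrasing of the paper's ``intermediate grid line at angle $\pi/5$'' and ``each ribbon only contains squares of one type'' arguments.
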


\begin{proof}
(i) Suppose $\B$ is the set of thin tiles. The ribbons of $P$ correspond to the grid lines of the regular multigrid $P_*$, and a pair of ribbons meet at a thin tile if and only if their grid lines meet at angle $\pi/5$. These ribbons represent vertices in the braces graph $G(\B)$  that are connected by an edge. Since any 2 grid lines either meet at angle $\pi/5$ or have an intermediate grid line at angle $\pi/5$ to each of them, it follows that $G(\B)$ is connected and spanning. A similar argument holds when $\B$ is the set of thick tiles.

(ii) The square tiles correspond to the edges of the braces graph. However, there are 2 types of square up to translation and each ribbon only contains squares of one type. It follows that $G(\B)$ is not connected and so, by Theorem \ref{t:rigidbracing}, $\G(\B)$ is infinitesimally flexible.
\end{proof}

In the next lemma we show that ribbons are necessarily  linearly localised. The proof gives a formula for the directions of the ribbons and this is summarised in Proposition \ref{p:ribbondirection}.

\begin{lemma}\label{l:multigridPribbons}
Every ribbon of a regular multigrid parallelogram tiling is linearly localised.
\end{lemma}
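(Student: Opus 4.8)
The plan is to track a single ribbon $\rho$ of the dual parallelogram tiling $P = P(P_*,\lambda)$ back to its defining grid line $G$ in the multigrid $P_*$, and show that the vertices of $P$ lying on the two boundary paths of $\rho$ stay within a bounded distance of a fixed line $H$ through the origin. The key structural fact from Section \ref{ss:PfromPstar} is that the vertices of $P$ correspond to the faces of $P_*$, that edges of $P$ are orthogonal to the edges of $P_*$ they cross, and that a ribbon $\rho$ of $P$ is exactly the sequence of parallelograms dual to the faces of $P_*$ that $G$ passes through, in order. So I would first set up this dictionary carefully: label the faces $f_0^*, f_1^*, f_2^*, \dots$ of $P_*$ crossed successively by $G$, and let $v_0, v_1, v_2, \dots$ be the corresponding vertices of $P$, which form one of the two boundary paths of $\rho$; the displacement $v_{k+1}-v_k$ is a vector of length $\lambda(H_k)$ orthogonal to the edge of $P_*$ that $G$ crosses between $f_k^*$ and $f_{k+1}^*$, where $H_k$ is the parallel class of that edge.

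Next I would compute the direction of $\rho$. As $G$ crosses a line of parallel class $H_k$, it enters a parallelogram of $P$ whose two edge directions are the normals to the two grid classes meeting there; the ribbon $\rho$ proceeds through parallelograms always sharing an edge normal to $G$'s own class — call it $H_G$ — so the "internal" edges of $\rho$ all have a common direction ${\bf m}_\rho$ normal to $H_G$, while the ribbon as a whole travels in the orthogonal direction, which is parallel to $G$ itself. Hence the natural candidate for the line $H$ is the line through the origin parallel to $G$ (parallel to $H_G$), and the tangential flex direction ${\bf t}_\rho$ is normal to $G$. This identification is essentially the content of Proposition \ref{p:ribbondirection}, and I would state the direction formula here: the ribbon's axis is parallel to the grid line, so that part is clean. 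What requires the regularity hypotheses is the \emph{boundedness}, i.e.\ that $v_k$ does not drift away from $H$.

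For the boundedness I would decompose each step $v_{k+1}-v_k$ into its component along $H$ (i.e.\ along $G$) and its component transverse to $H$. The transverse component comes only from the edges of $P$ crossing grid lines \emph{not} in the class $H_G$, i.e.\ from the lines of the other $r-1$ grid directions that $G$ meets; each such crossing contributes a bounded transverse displacement $\pm$(some fixed amount), with the sign depending on which side of $G$ the relevant vertex of $P_*$ lies. The crucial point is that as $G$ travels a long way, the faces of $P_*$ it crosses from the class $A_i(\L)$ for $i\neq i_G$ alternate in a controlled way: because the grids are regular — distinct directions, triple intersections empty, intersection angles and parallel-line spacings bounded away from zero — the intersections of $G$ with the $i$-th grid occur at roughly evenly spaced points along $G$, and between two consecutive such intersections the transverse offset of the ribbon's boundary vertex cannot accumulate, since it is pinned by the parallelogram-completion construction to stay between the two lines of $\L$ (pulled back by $A_{i_G}$) bounding the current face. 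I would make this precise by showing the transverse coordinate of $v_k$ equals a bounded-width "sawtooth": it increases by $\lambda(H_i)\cos\theta$ type increments and then is forced back, so it lies in a fixed interval determined by the finitely many weights $\lambda(\cdot)$ and the (bounded-below) angles.

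\textbf{The main obstacle} I expect is exactly this last accumulation estimate — ruling out a slow transverse drift of the ribbon. It is intuitively obvious that a straight grid line $G$ cannot make the dual ribbon "curve away" from $G$'s direction, but turning that into a clean bound requires exploiting that the \emph{plan} $P_*$ is made of genuinely straight lines: the sequence of face-crossings of $G$ is governed by the linear functionals defining the other grids, so the transverse displacement of $v_k$ is, up to bounded error, a fixed linear function of the position along $G$ with \emph{zero} slope (the slope being the average of the signed transverse increments, which vanishes because $G$ crosses each other grid infinitely often on both sides with asymptotic density given by $|\sin(\text{angle})|/(\text{spacing})$, and the geometry forces a cancellation). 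The regularity hypotheses on $P_*$ — bounded-below angles and spacings, no triple points — are what make all these densities and increments well-defined and finite, and hence what make the sawtooth bounded rather than drifting. Once the transverse coordinate of every boundary vertex of $\rho$ is confined to a fixed bounded interval, $\rho$ is $H$-localised with $H$ the line through the origin parallel to its grid line, completing the proof; the explicit direction recipe is then recorded in Proposition \ref{p:ribbondirection}.
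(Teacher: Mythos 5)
There is a genuine error in your choice of the line $H$. You take $H$ to be the line through the origin parallel to the grid line $G$, and you argue that the transverse displacement of the boundary vertices $v_k$ stays bounded because ``the geometry forces a cancellation'' of the signed transverse increments. In general no such cancellation occurs. All tiles of $\rho$ of a fixed type $(1,j)$ (those dual to intersections of $G$ with the $j$-th grid) are translates of one another, so each contributes the \emph{same} transverse increment $t_{1,j}$ to the boundary path --- there is no alternation of sign. Since $t_{1,j}$ is, up to sign, $\lambda_j\cos\theta_j$ where $\theta_j$ is the angle between $G$ and the $j$-th grid, these increments are nonzero unless the grids are orthogonal, and their frequency-weighted sum $\sum_j\delta_{1,j}t_{1,j}$ has no reason to vanish. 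Consequently the boundary path drifts \emph{linearly} away from the direction of $G$, and the ribbon is $H$-localised only for the tilted line whose angle with $G$ has tangent $\bigl(\sum_j\delta_{1,j}t_{1,j}\bigr)/\bigl(\sum_j\delta_{1,j}s_{1,j}\bigr)$; this is exactly the content of Proposition \ref{p:ribbondirection}, and the paper explicitly notes that the ribbon figure RF$(P)$ need not coincide with the line figure of $P_*$. (They do coincide for Penrose tilings, but only because a bilateral symmetry pairs the tile types so that their $t$-increments cancel --- the special case that appears to have guided your intuition.) Your subsidiary ``pinning'' argument also fails: the faces of $P_*$ crossed by $G$ are indeed confined between consecutive grid lines in the \emph{plan} space, but the vertices $v_k$ live in the other copy of $\bR^2$ and are obtained by summing edge vectors, so nothing confines their transverse coordinate.

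The repair is essentially your own set-up pushed one step further, and is the paper's argument: write the arclength parameters of the intersections of $G$ with the $j$-th grid as an arithmetic progression $\{\beta_{1,j}n+\gamma_{1,j}:n\in\bZ\}$, count $n_{1,j}(s_*)=s_*/\beta_{1,j}+O(1)$ crossings up to parameter $s_*$, and conclude that the boundary vertex coordinates satisfy $(s_k,t_k)=s_*\cdot\bigl(\sum_j s_{1,j}/\beta_{1,j},\,\sum_j t_{1,j}/\beta_{1,j}\bigr)+O(1)$, so the boundary path stays within bounded distance of the line of slope $m=\bigl(\sum_j\delta_{1,j}t_{1,j}\bigr)/\bigl(\sum_j\delta_{1,j}s_{1,j}\bigr)$ in the $(s,t)$-coordinates; that line, not the direction of $G$, is the correct $H$.
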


\begin{proof}
Let $\rho$ be a ribbon, of a regular multigrid parallelogram tiling  $P$, which corresponds to a line $L$ of the component grid $A_1(\L)$ of the predual multigrid  $P_*$. Assume that $L$ is not a vertical line and and is parametrised by arclength by the real variable $s$ measured from a  vertex of $P_*$, 
and let $T_1$ be the tile of $\rho$ corresponding to this intersection point. Then the set $X_{1,j}$ of parameter values $s$ for points of intersection of $L$ with lines of a grid $A_j(\L)$, with $j\neq 1$, has the form
\[
X_{1,j}=\{\beta_{1,j}n+\gamma_{1,j}:n\in \bZ\},
\]
where, without loss of generality, $\beta_{1,j}>0$ and $ \gamma_{1,j}\geq 0$. 

Note that the points of the parameter set $X_{1,j}$ correspond to the appearance of congruent tiles in the ribbon $\rho$ and we say that these tiles are of type $(1,j)$. Here $2 \leq j \leq r$ where $r$ is the number of grids for $P_*$. Viewing these sets as containing points of the same colour,
the multi-coloured set $X_1$, formed by the union of the distinctly coloured sets $X_{1,j}$, encodes the sequential appearance of the tile types of $\rho$. 

To show that $\rho$ is linearly localised, introduce a copy $L'$ of $L$ in the separate ambient space for $P$, with its origin, for parameter value $s=0$, located at a vertex $v_1$ of $T_1$. 
Let $T_1, T_2, \dots $ be the tiles of $\rho$ in sequential order corresponding to the positive $s$-direction of $L'$ and, rechoosing $v_1$ if necessary, arrange that $v_1$ is not a vertex of $T_2$.  Also let $M'$ be a line through this vertex which is  orthogonal to $L'$, with arclength parametrised by $t$. The vertex $v_1$ has $(s,t)$-coordinates $(0,0)$ and lies on a two-way infinite piecewise linear boundary curve $C$ of the ribbon $\rho$. We show that $C$ is linearly localised and this will complete the proof.

Let $v_1, v_2, \dots $ be the vertices on $C$ in their sequential order with $(s,t)$-coordinates $(s_k,t_k), k=1,2, \dots.$
Let $T$ be a tile of $\rho$ of type $(1,j)$. It has an edge $e$ which is not parallel to $M'$ and we define
the \emph{$s$-increment} (resp. $t$-increment) of $T$, for $\rho$, to be the length $s_{1,j}$ (resp. $t_{1,j}$) of the projection of $e$ onto $L'$ (resp. $M'$). See Figure \ref{f:ribbonincrements}.
\begin{center}
\begin{figure}[ht]
\centering
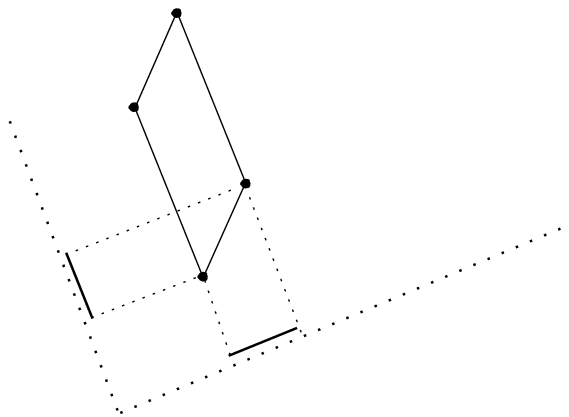
\caption{Increments $s_{1,j}>0$ and $t_{1,j}\in \bR$ associated with a tile $T$ of type $(1,j)$ in a ribbon associated with a grid line of the grid $A_1(\L)$.}
\label{f:ribbonincrements}
\end{figure}
\end{center}

Let $s_*$ be a parameter value for the line $L$. Then, for $s_* >0$ the finite coloured set $X_1\cap [0,s_*]$ encodes a finite stretch of the ribbon $\rho$. For each $j,$ we define  the subset cardinalities 
\[
n_{1,j}(s_*)=|X_{1,j}\cap [0,s_*]| =|\bZ\cap [\frac{-\gamma_{1,j}}{\beta_{1,j}}, \frac{s_*-\gamma_{1,j}}{\beta_{1,j}}]|.
\]
Then the coordinates $(s_{k+1},t_{k+1})$ of the final vertex $v_{k+1}$ for this stretch of $C$ are
\[
s_{k+1}= \sum_{j=2}^r n_{1,j}(s_*)s_{1,j}, \quad t_{k+1}= \sum_{j=2}^r n_{1,j}(s_*)t_{1,j}. 
\]
For each $j$ we have
\[
n_{1,j}(s_*) = \frac{s_*}{\beta_{1,j}} +o(s_*), \quad \mbox{as}\quad s_*\to \infty,
\]
and so
\[
\frac{t_k}{s_k} \to m, \quad \mbox{as}\quad k\to \infty,\quad \mbox{where}\quad m= \frac{\sum_{j=2}^r\delta_{1,j}t_{1,j}}
{\sum_{j=2}^r\delta_{1,j}{s_{1,j}}},
\]
and where $\delta_{1,j}$ is the relative frequency of the colour $j$, namely the ratio
\[
\delta_{1,j} =\frac{\beta_{1,j}^{-1}}{\sum_{j=2}^r\beta_{1,j}^{-1}}.
\]

The same argument holds for $s_*<0$, and it follows that the ribbon $\rho$ is $H$-localised for the line $H$ through the origin with gradient $m$.
\end{proof}

\begin{definition}\label{d:ribbonfigure}
The \emph{ribbon figure}, or \emph{ribbon line figure}, RF$(P)$ of a parallelogram tiling $P$ is the set of lines $H$ through the origin for which there exists an $H$-localised ribbon. 
\end{definition}

\begin{thm}\label{t:FiguresformultigridGP}
Let $\G_P$ be the bar-joint framework of a regular multigrid parallelogram tiling $P$. 
Then the bulk shear figure BSF$(\G_P)$ and the linear flex figure LFF$(\G_P)$ are equal to the ribbon figure RF$(P)$. 
\end{thm}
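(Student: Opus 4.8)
The plan is to prove the two set equalities $\mathrm{BSF}(\G_P)=\mathrm{RF}(P)$ and $\mathrm{LFF}(\G_P)=\mathrm{RF}(P)$ by chaining three inclusions, almost all of which are already packaged in the earlier results for general parallelogram frameworks together with Lemma \ref{l:multigridPribbons}. First I would establish $\mathrm{RF}(P)\subseteq \mathrm{BSF}(\G_P)$: if $H\in\mathrm{RF}(P)$ then by definition there is an $H$-localised ribbon $\rho$, and the ribbon shear $u^\rho$ is a nonzero infinitesimal flex whose support lies within a bounded distance of $H$; it is immediate from the definition of a ribbon shear that $u^\rho$ assigns one fixed velocity to one side and the zero velocity to the other, so it is an $H$-localised bulk shear in the sense of Definition \ref{d:bulkshear} (with $a$ the common nonzero velocity, $b=0$). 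Hence $H\in\mathrm{BSF}(\G_P)$.

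Next I would record the trivial inclusion $\mathrm{BSF}(\G_P)\subseteq \mathrm{LFF}(\G_P)$: a nonzero $H$-localised bulk shear is in particular a nonzero $H$-localised infinitesimal flex (its support is $H$-localised because it is the zero vector off a band, after subtracting nothing — or rather, a bulk shear with $a\ne b$ has $H$-localised \emph{support} only in the sense that the set where it differs from a \emph{constant} is a band; here I should be slightly careful). The cleanest route is to use Theorem \ref{t:bulkshear_for_P}, which says every $H$-localised bulk shear is a translational flex plus a finite sum $\sum_{\rho\in F}\alpha_\rho u^\rho$ over $H$-localised ribbons. Subtracting the translational part (which is harmless for the purpose of exhibiting a flex, but changes the support), the remaining finite sum of $H$-localised ribbon shears is a genuinely $H$-localised flex; it is nonzero provided $F\neq\emptyset$, which holds when the bulk shear is nonconstant, i.e. genuinely has $a\neq b$ — and if $a=b$ the bulk shear is a translation, which is excluded from being a nontrivial witness. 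So for any line $H$ carrying a nonzero bulk shear there is a nonempty $H$-localised $F$, hence an $H$-localised ribbon, giving at once $\mathrm{BSF}(\G_P)\subseteq\mathrm{RF}(P)$ and simultaneously closing the loop.

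Finally I would close the chain with $\mathrm{LFF}(\G_P)\subseteq\mathrm{RF}(P)$: if $H\in\mathrm{LFF}(\G_P)$ there is a nonzero $H$-localised infinitesimal flex $u$, and Theorem \ref{t:bandltd_for_P} gives $u=\sum_{\rho\in F}\alpha_\rho u^\rho$ over a finite set $F$ of $H$-localised ribbons (note the translational terms $u^x,u^y$ are absent here since a translational flex is not $H$-localised). Since $u\neq 0$ the set $F$ is nonempty, so there is an $H$-localised ribbon and $H\in\mathrm{RF}(P)$. Combining the three inclusions yields
\[
\mathrm{RF}(P)\subseteq \mathrm{BSF}(\G_P)\subseteq \mathrm{LFF}(\G_P)\subseteq \mathrm{RF}(P),
\]
so all three coincide. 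The one place that needs genuine input beyond the general theory is the implicit claim that $\mathrm{RF}(P)$ is actually a \emph{full} set of lines realised by localised ribbons — but this is exactly Lemma \ref{l:multigridPribbons}, which guarantees that \emph{every} ribbon of a regular multigrid parallelogram tiling is linearly localised, so that $\mathrm{RF}(P)$ is a nonempty finite set of lines and there is no subtlety about existence. I expect the only mild obstacle to be bookkeeping around the translational summand: one must be careful that "$H$-localised bulk shear" allows a nonzero constant background, whereas "$H$-localised flex" in $\mathrm{LFF}$ requires $H$-localised support, and to handle this cleanly I would invoke Theorems \ref{t:bandltd_for_P} and \ref{t:bulkshear_for_P} rather than arguing directly from supports.
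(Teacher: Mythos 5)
Your overall strategy is the right one and is essentially the paper's: the paper's own proof is the single line ``this follows from Theorem \ref{t:bulkshear_for_P}'', with Lemma \ref{l:multigridPribbons} supplying the fact that every ribbon is linearly localised. Three of your inclusions are sound. RF$(P)\subseteq$ BSF$(\G_P)$ holds because a ribbon shear of an $H$-localised ribbon is literally an $H$-localised bulk shear with $a\ne b=0$. BSF$(\G_P)\subseteq$ RF$(P)$ and LFF$(\G_P)\subseteq$ RF$(P)$ hold because Theorems \ref{t:bulkshear_for_P} and \ref{t:bandltd_for_P} force a nonempty finite set $F$ of $H$-localised ribbons whenever the witness flex is nonzero (and not a pure translation).

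The gap is in your step BSF$(\G_P)\subseteq$ LFF$(\G_P)$ --- equivalently RF$(P)\subseteq$ LFF$(\G_P)$, which your chain needs somewhere. You assert that, after subtracting the translational part, the finite sum $\sum_{\rho\in F}\alpha_\rho u^\rho$ over $H$-localised ribbons is ``a genuinely $H$-localised flex''. It is not in general: each ribbon shear $u^\rho$ equals a fixed nonzero vector on the whole of one side of $\rho$, so its support is an unbounded half-plane-like region, not a band. A finite sum of such fields is constant equal to $0$ on one component $U(H,c)$ and constant equal to $\sum_{\rho}\alpha_\rho{\bf t}_\rho$ on the other, and the latter vanishes only for special coefficient choices; in particular a single ribbon shear ($|F|=1$) is never $H$-localised, so the inclusion cannot be read off from Theorem \ref{t:bulkshear_for_P} alone. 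The repair: each of the $r$ grid families of the regular multigrid contributes infinitely many parallel ribbons, all $H$-localised for the same line $H$ through the origin (Lemma \ref{l:multigridPribbons} and Proposition \ref{p:ribbondirection}), and all with parallel internal edges, hence parallel tangential flex vectors. Take two such ribbons $\rho,\rho'$ with nested supports and normalise their shears to carry the same nonzero velocity ${\bf t}$ on the far side; then $u^{\rho}-u^{\rho'}$ is a nonzero infinitesimal flex supported on the band of joints between the two ribbons, hence $H$-localised. (This is exactly the construction the paper uses in Section 4 with the flexes $z_{i,k}=w_{i,k}-w_{i,k-1}$.) With that one substitution your chain of inclusions closes correctly.
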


\begin{proof}
This follows from Theorem \ref{t:bulkshear_for_P}.
\end{proof}


For any set $\M$ of lines, not necessarily passing through the origin, let LF$(\M)$, the \emph{line figure} of $\M$, be the set of lines through the origin which are parallel to some line of $\M$.  
For a parallelogram or rhombus tiling $P$, given by a multigrid $P_*$, the line figure LF$(P_*)$ need not agree with the ribbon figure RF$(P)$. The details for this, in terms of increments of projections, are given in the proof of Lemma \ref{l:multigridPribbons} and summarised in the following proposition.

\begin{prop}\label{p:ribbondirection}
Let $P=P(P_*,\lambda)$ where $P_*$ is a regular multigrid, with $r$ nonparallel component grids $P_*^i$ and line figure LF$(P_*)=\{L_1,\dots , L_r\}$.
For $1\leq i \leq r$, and for each tile $T_{i,j}, j\neq i,$ associated with the pair $L_i, L_j$, let $s_{ij}>0$ (resp. $t_{ij}\in \bR$) be the length of the projection
onto $L_i$ (resp. $L_i^\perp$) of the edge of the tile which is not orthogonal to $L_i$. Also, let $\delta_{i,j}$, for $j\neq i$, be the relative frequencies in $L_i$ of the points of intersection with $P_*^j$, with $\sum_{j:j\neq i}\delta_{i,j} =1$.  Then the ribbon for $L_i$ is $H$-localised, where the tangent of the angle between $H$ and $L_i$ is
\[
\frac{\sum_{j:j\neq i}\delta_{i,j}t_{i,j}}
{\sum_{j:j\neq i}\delta_{i,j}{s_{i,j}}}
\]
\end{prop}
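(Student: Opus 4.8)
The plan is to observe that Proposition \ref{p:ribbondirection} is essentially a restatement of the conclusion of Lemma \ref{l:multigridPribbons}, so the proof should simply point to that lemma and reconcile the indexing. First I would fix $i$ with $1\le i\le r$ and apply Lemma \ref{l:multigridPribbons} to the ribbon $\rho_i$ of $P$ corresponding to the grid line class $L_i$; after an affine change of coordinates we may assume $L_i$ plays the role of the grid $A_1(\L)$ in the lemma, so the lemma yields that $\rho_i$ is $H$-localised with $H$ having gradient $m = \bigl(\sum_{j\ne i}\delta_{i,j}t_{i,j}\bigr)/\bigl(\sum_{j\ne i}\delta_{i,j}s_{i,j}\bigr)$ measured in the $(s,t)$-frame, where $s$ runs along $L_i$ and $t$ along $L_i^\perp$. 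Since in that frame $L_i$ itself is the $s$-axis, a gradient of $m$ relative to the $(s,t)$-axes is exactly an angle $\theta$ from $L_i$ with $\tan\theta = m$, which is the claimed formula.

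The remaining points to verify are purely bookkeeping. I would check that the quantities $s_{ij},t_{ij}$ defined in the proposition (projections onto $L_i$ and $L_i^\perp$ of the non-orthogonal edge of the tile $T_{i,j}$) coincide with the increments $s_{1,j},t_{1,j}$ of Lemma \ref{l:multigridPribbons}: a tile of $\rho$ of type $(1,j)$ has exactly one pair of parallel edges orthogonal to $M'$ (these are the shared edges along the ribbon, orthogonal to the grid line $L$) and exactly one pair not orthogonal to $M'$, i.e.\ not orthogonal to $L_i$; the $s$- and $t$-increments are the projections of such an edge onto $L'$ and $M'$, matching the proposition's description. I would also note that $s_{ij}>0$ holds because the tiles are nondegenerate parallelograms whose shared edges are orthogonal to $L_i$, so the transverse edge has a strictly positive component along $L_i$ after choosing the orientation of $L'$ consistently with the ordering $T_1,T_2,\dots$; and $\delta_{i,j}$ is well-defined as a relative frequency since, by regularity of the multigrid, the intersection points of $L_i$ with each other grid $P_*^j$ form an arithmetic progression $\{\beta_{i,j}n+\gamma_{i,j}:n\in\bZ\}$ with $\beta_{i,j}>0$, giving $\delta_{i,j} = \beta_{i,j}^{-1}/\sum_{k\ne i}\beta_{i,k}^{-1}$ and hence $\sum_{j\ne i}\delta_{i,j}=1$.

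There is no serious obstacle; the one place requiring a small argument is the passage, already carried out inside the proof of Lemma \ref{l:multigridPribbons}, from $n_{i,j}(s_*) = s_*/\beta_{i,j} + o(s_*)$ to $t_k/s_k \to m$, which uses that both $s_k$ and $t_k$ are the same linear combinations $\sum_j n_{i,j}(s_*)s_{i,j}$ and $\sum_j n_{i,j}(s_*)t_{i,j}$ of the counting functions, and that $s_k\to\infty$ since each $s_{i,j}>0$ and at least one $n_{i,j}(s_*)\to\infty$. Because that computation is exactly the content of the lemma's proof, the proposition's proof can legitimately be a one-line deduction. Accordingly I would write:

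\begin{proof}
This is a reformulation of Lemma \ref{l:multigridPribbons} and its proof. Fix $i$ and apply an affine change of coordinates so that $L_i$ plays the role of the grid line $L$ of $A_1(\L)$ in that proof, with arclength parameter $s$ along $L_i$ and $t$ along $L_i^\perp$. For $j\ne i$ the intersection points of $L_i$ with the grid $P_*^j$ form an arithmetic progression with common difference $\beta_{i,j}>0$, and $\delta_{i,j}=\beta_{i,j}^{-1}/\sum_{k\ne i}\beta_{i,k}^{-1}$ is their relative frequency, so $\sum_{j\ne i}\delta_{i,j}=1$. The tile $T_{i,j}$ has one pair of edges orthogonal to $L_i$ (the edges shared along the ribbon) and one pair that is not, and $s_{i,j}>0$, $t_{i,j}\in\bR$ are the lengths of the projections of such a transverse edge onto $L_i$ and $L_i^\perp$; these are the $s$- and $t$-increments of $T_{i,j}$ in the sense of the lemma. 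By the computation in the proof of Lemma \ref{l:multigridPribbons}, the boundary curve of the ribbon for $L_i$ is $H$-localised for the line $H$ through the origin whose slope relative to the $(s,t)$-axes is
\[
m=\frac{\sum_{j:j\ne i}\delta_{i,j}t_{i,j}}{\sum_{j:j\ne i}\delta_{i,j}s_{i,j}}.
\]
Since $L_i$ is the $s$-axis in these coordinates, $m$ is precisely the tangent of the angle between $H$ and $L_i$, as claimed.
\end{proof}
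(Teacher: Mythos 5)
Your proposal is correct and matches the paper exactly: the paper gives no separate proof of Proposition \ref{p:ribbondirection}, stating explicitly that it merely summarises the increment/relative-frequency computation carried out in the proof of Lemma \ref{l:multigridPribbons}, which is precisely the deduction you make (your index reconciliation and the remark that the $(s,t)$-slope $m$ is the tangent of the angle between $H$ and $L_i$ are the only bookkeeping needed).
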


The multigrid line figure and the ribbon figure may coincide for reasons of symmetry and this is so 
for the Penrose rhomb tilings. Specifically, for a given ribbon direction associated with a grid line $L$ the 4 other intersecting grid line types can be paired (by bilateral symmetry with respect to an orthogonal line to  $L$), and their associated rhombs at intersection points similarly paired. These pairs of rhombs, with bilateral symmetry, have cancelling $t$-increments.


\subsection{Finitely flexible quasicrystals}
Let $P$ be the dual parallelogram tiling $P(P_*,\lambda)$ defined by the regular multigrid $P_*$
with component grids $A_i(\L)$, for $1\leq i\leq r$, as in Definition \ref{d:multigridtilings}. For each pair of distinct indices $i, j$ let $W(i,j)$ be the lattice of vertices of $P_*$, when viewed as an embedded graph, which is determined by the component grids for $i,j$. Let $W(i,j)= \{w(i,j,k,l):(k,l)\in \bZ^2\}$
where the labelling corresponds to the lines of the grid $A_i(\L)$ (resp. $A_j(\L)$) being ordered by indices $k\in \bZ$ (resp. $l\in \bZ$). For definiteness we assume that the linear part of each $A_i$ has positive determinant and the labelling is inherited from the $x$-coordinate labelling of the lines of $\L$.
The parallelogram tiles of $P$ have a corresponding labelling, $T(i,j,k,l)$, for $ i\neq j, (k,l)\in \bZ^2$.

\begin{definition}\label{d:semibracedquasicrystal}
A \emph{checkered multigrid parallelogram tiling}, with integer \emph{density index} $p\geq 2$, is a pair $(P,\B)$ where $P$ is a regular multigrid parallelogram tiling and $\B$ is the set of tiles 
\[
\B= \{T(i,j,k,l):1\leq i,  j\leq r, i\neq j, (k,l)\in \bZ^2, k=l\mod p\}.
\]
A \emph{checkered quasicrystal framework} is a bar-joint
framework $\G_P(\B)$ for such a pair which is obtained from the parallelogram framework $\G_P$ by adding bracing bars for the tiles of $\B$.
\end{definition}

If the tiles of $\B$ are considered as being coloured black, rather than the default colour white, then for  $p=2$, the coloured tiling $(P,\B)$ 
is a quasicrystal counterpart of an infinite checkerboard. Accordingly in this case we refer to $\G_P(\B)$ as a \emph{checkerboard quasicrystal}.



It is straightforward to see that there exist $p-1$ tiles which when added to the black set $\B$ create a bracing set $\B^+$ whose braces graph $G(\B^+)$ is connected and spanning. By Theorem \ref{t:rigidbracing}, $\G(\B^+)$ is infinitesimally rigid and $\dim \F(\G(\B))\leq 3+(p-1).$ In fact we have equality.

\begin{thm}\label{t:fintelyflexible}
Let $\G_P(\B)$ be a checkered quasicrystal framework determined by a pair $(P,\B)$ with index $p\geq 2$.
Then the infinitesimal flex space $\F(\G_P(\B))$ has dimension $p+2$.
\end{thm}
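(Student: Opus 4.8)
The plan is to establish the lower bound $\dim \F(\G_P(\B)) \geq p+2$, since the upper bound $\dim \F(\G_P(\B)) \leq 3 + (p-1) = p+2$ is already noted before the statement. By Theorem~\ref{t:shearbasis}, every infinitesimal flex $u$ of $\G_P$ with $u(p_1)=0$ has a unique representation $u = \sum_{\rho} \alpha_\rho u^\rho$, and adding $u^x, u^y$ gives all flexes of $\G_P$; the flexes of $\G_P(\B)$ are exactly those for which every braced tile $T(i,j,k,l)$ with $k \equiv l \bmod p$ imposes its extra linear condition. As in the proof of Theorem~\ref{t:rigidbracing}, restricting to the $4$-bar subframework $\T$ of a tile $T$ through ribbons $\sigma,\tau$ shows that the brace condition on $T$ forces $\alpha_\sigma = \alpha_\tau$ (after normalising so that the tangential flex vectors agree on $T$; one must be careful here that the brace forces the restriction of $\alpha_\sigma u^\sigma + \alpha_\tau u^\tau$ to be an infinitesimal rotation of the parallelogram, and since each summand vanishes at one joint the coefficients must in fact be equal up to the geometric normalisation). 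So I would first record precisely the scalar relation that a brace on $T(i,j,k,l)$ imposes on $\alpha_{\rho_i}$ and $\alpha_{\rho_j}$, where $\rho_i$ is the ribbon for the grid line of $A_i(\L)$ of index $k$ and $\rho_j$ the ribbon for the grid line of $A_j(\L)$ of index $l$.

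Next I would reformulate the brace conditions as a combinatorial constraint on a function on the set of ribbons. Index the ribbons by pairs $(i,k)$ with $1 \leq i \leq r$, $k \in \bZ$, so a flex corresponds to an assignment $(i,k) \mapsto \alpha_{i,k}$ that tends to zero strictly (all but finitely many are zero along any exhausting sequence — but crucially the brace conditions will propagate, so I should instead think of it as: the space of flexes modulo rigid motions is the space of all functions $\alpha$ supported appropriately such that $\alpha_{i,k}$ depends only on $k \bmod p$, once the graph-connectivity is used). The braced tiles are exactly the $T(i,j,k,l)$ with $k \equiv l \bmod p$, so the brace imposes $\alpha_{i,k} = \alpha_{j,l}$ whenever $k \equiv l \bmod p$ (with the appropriate normalisation of the $u^\rho$ folded in). Since the multigrid is regular, for each fixed pair $(i,j)$ and each residue $c \bmod p$ there is some braced tile $T(i,j,k,l)$ with $k \equiv l \equiv c$; chaining these relations across all $i$ shows that on the connected "braces graph restricted to residue class $c$" the coefficient $\alpha_{i,k}$ for $k \equiv c$ is a single common value $\beta_c$, independent of $i$ and of which representative $k$. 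Conversely, any choice of $(\beta_0,\dots,\beta_{p-1})$ together with the two translation parameters yields a genuine flex of $\G_P(\B)$: one defines $\alpha_{i,k} = \beta_{k \bmod p}$ and checks that every tile's $4$-bar restriction, braced or not, is satisfied — braced tiles because the two relevant coefficients are equal, unbraced tiles $T(i,j,k,l)$ with $k \not\equiv l$ because they carry no extra condition. The one subtlety to verify is that the velocity field $\sum_\rho \beta_{[\rho]} u^\rho$ really is well-defined at every joint, i.e. tends to zero strictly — this is where linear localisation of the ribbons (Lemma~\ref{l:multigridPribbons}) or at least the standing finiteness assumptions on $P$ enter: only finitely many ribbons pass near any joint, but infinitely many ribbons in a residue class are nonzero, so I must check that at a fixed joint only finitely many $u^\rho$ are nonzero. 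In fact a ribbon shear $u^\rho$ is nonzero at $p_i$ only if $p_i$ lies strictly on the far side of $\rho$ from $p_1$, and for a fixed $p_i$ only finitely many ribbons separate it from $p_1$ — this follows from the standing assumptions on $P$ exactly as in the proof of Theorem~\ref{t:bandltd_for_P}. So the infinite sum is legitimate.

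Putting this together: the map sending a flex $u$ (modulo the $3$-dimensional rigid-motion space, or rather modulo translations after using $u(p_1)=0$) to the tuple $(\beta_0,\dots,\beta_{p-1})$ is a well-defined linear isomorphism from $\F(\G_P(\B))/\langle u^x,u^y, \text{rotation}\rangle$ onto... — more cleanly, $\F(\G_P(\B))$ is spanned by $u^x$, $u^y$, a rotation flex, and the $p-1$ independent combinations $\sum_{\rho: [\rho]=c} u^\rho - \sum_{\rho:[\rho]=0} u^\rho$ for $c = 1, \dots, p-1$; but actually the rotation flex is itself realized as $\sum_\rho u^\rho$ up to translation, i.e. it corresponds to $\beta_0 = \cdots = \beta_{p-1}$, so the clean count is: $2$ translations plus $p$ independent choices of $(\beta_0,\dots,\beta_{p-1})$, giving $\dim \F(\G_P(\B)) = p + 2$. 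I would double-check this bookkeeping — whether the global rotation is already one of the $\beta$-flexes or is genuinely extra — against the known upper bound $p+2$ to make sure the two agree; this reconciliation of the rotation flex with the constant assignment $\beta_c \equiv \text{const}$ is the one place the argument could slip by one.

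The main obstacle will be the "conversely" direction: verifying that the constant-on-residue-classes assignment $\alpha_{i,k} = \beta_{k \bmod p}$ actually produces a flex of $\G_P(\B)$ and not merely a formal solution of the linear relations — i.e. checking (a) that the infinite sum $\sum_\rho \beta_{[\rho]} u^\rho$ defines a velocity field at all (strict convergence, via the finiteness of separating ribbons), and (b) that it satisfies the brace condition at \emph{every} braced tile uniformly, which requires knowing the normalisation of the ribbon shears is compatible across tiles — in other words that "the brace forces $\alpha_\sigma = \alpha_\tau$" holds with the \emph{same} normalisation of $u^\sigma$ at every braced tile it meets. For rhombus tilings (Penrose, Ammann–Beenker) the ribbon shears have a natural unit normalisation making this transparent; for general parallelogram multigrid tilings one must track the geometric factors, and I expect the cleanest route is to normalise each $u^\rho$ so that its nonzero velocity is the unit tangential vector $\mathbf{t}_\rho$ of Figure~\ref{f:ribbonPartB}, then show the brace relation at any tile on $\rho$ is exactly $\alpha_\sigma = \pm\alpha_\tau$ with a sign depending only on orientation conventions, and finally argue the signs are globally consistent within a residue class because the braces graph restricted to that class is connected.
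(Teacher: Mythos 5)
Your overall strategy matches the paper's: both decompose a flex vanishing at $p_1$ via Theorem \ref{t:shearbasis} into pieces indexed by the $p$ residue classes, observe that these classes are exactly the connected components of the braces graph, and conclude that each component contributes one dimension, giving $p+2$ after adding the two translations. The one step you handle differently is the lower bound. The paper does not construct the $\beta$-flexes explicitly: it notes (as in the proof of Theorem \ref{t:rigidbracing}) that the components $z_t$ of any flex are themselves flexes of $\G_P(\B)$, that connectedness of $G_t$ makes each component subspace at most one-dimensional, and then certifies that each is \emph{exactly} one-dimensional by decomposing the rotation flex about $p_1$ -- since the rotation is a genuine flex of the braced framework whose components $z_t$ are all nonzero. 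This is precisely the device that disposes of what you correctly identify as your main obstacle: the consistency of the brace relations. You propose to verify consistency by hand, and your convergence check (only finitely many ribbons separate a fixed joint from $p_1$) is fine, but note that the relation a brace imposes at a tile with ribbons $\sigma,\tau$ is a proportionality $\alpha_\sigma c_\sigma/|e_\sigma| = \alpha_\tau c_\tau/|e_\tau|$ involving the internal edge lengths, not merely a sign; and connectedness of $G_t$ alone does not guarantee that a system of proportionality relations on a graph has a nonzero solution -- one needs the product of the ratios around every cycle to equal $1$. In the multigrid setting this does hold (the geometric factor depends only on the grid index, or equivalently the rotation flex witnesses a consistent solution), but your closing argument ``the signs are globally consistent because the restricted braces graph is connected'' elides this cycle condition. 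If you either normalise each $u^\rho$ so that its nonzero velocity is the rotated internal edge vector $e_\rho^\perp$ (making the relation literally $\alpha_\sigma=\alpha_\tau$), or simply import the paper's rotation argument, the gap closes and your count of $2 + p$ (with the global rotation sitting inside the constant assignment $\beta_0=\cdots=\beta_{p-1}$, as you correctly reconcile) is right.
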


\begin{proof} The braces graph $G(\B)$ is spanning since on every ribbon of type $(i,k)$, determined by the $k^{th}$ line of the grid $A_i(\L)$ of $P_*$, there is a tile 
$T(i,j,k,l)$ in $\B$ for some $j\neq i$ and $l\in \bZ$. The braces graph has $p$ components, say
$G_t, 0\leq t \leq p-1,$ where the vertices/ribbons of $G_t$ are of type $(i,k)$ with $k =t \mod p$. 
Let $p_1$ be a fixed joint and let
$\F(\G_P(\B))_0$ be the subspace of infinitesimal flexes $z\in \F(\G_P(\B))$ with $z(p_1)=0$. Note that every infinitesimal flex of $\G_P(\B)$ is an infinitesimal flex of the parallelogram framework $\G_P$. By Theorem \ref{t:shearbasis} it follows that each $z\in \F(\G_P(\B))_0$ has a unique representation
\[
z=
\sum_{t=0}^{p-1}\sum_{n=1}^\infty \alpha_{t,n} u_{t,n}
\]
where $\{u_{t,n}:n\in \bN, 0\leq t \leq p-1\}$ is a set of ribbon shears, associated with $p_1$ and the set of ribbons $\{\rho_{t,n}:n\in \bN, 0\leq t \leq p-1\}$ of $\G_P$. Here the index $t$ indicates the component $G_t$ of $G(\B)$ for which the $t$-labelled ribbons are vertices.

Since the subgraphs  $G_t$ are the connected components of $G(\B)$ it follows, as in the proof of Theorem \ref{t:rigidbracing}, that the velocity fields 
\[
z_t = \sum_{n=1}^\infty \alpha_{t,n} u_{t,n}
\] 
are also infinitesimal flexes of $\G_P(\B)$. Moreover, since $G_t$ is connected the coefficients $\alpha_{t,n}, n=2,3,\dots ,$ are determined by $\alpha_{t,1}$ for each $t$, and so the subspace of infinitesimal flexes $z_t$ is either 1-dimensional or the zero subspace. It remains to show that they are 1-dimensional. This follows since if $z$ is an infinitesimal rotation flex about $p_1$ then its component flexes $z_t$ are nonzero. 
\end{proof}

\begin{rem}\label{r:singlezeromode}
The alternatingly braced squares framework, for the usual infinite checkerboard, is perhaps the paradigm example of a crystallographic framework with a single nontrivial zero mode (up to scalar multiples). See, for example, Figure 2 and Example (f) of Power \cite{pow-poly}, and Figure 6 of Dove \cite{dov-2019}. The framework  also features as cross-sections of the three dimensional perovskite framework of corner connected octahedra, and is responsible for lines in the zero mode spectrum.
\end{rem}

\begin{rem} As we have noted in the introduction, bar-joint frameworks provide a fundamental model for network materials and it seems to us that explicit free basis methods can provide a useful new perspective. In particular this viewpoints avoids considerations of periodic approximants, or periodic boundary conditions, which can be problematic \cite{mou-nau}.
\end{rem}


\section{Zero mode spectra for quasicrystals}\label{ss:quasicrystalbarjoint}





The notion of a zero mode, as opposed to a floppy mode (infinitesimal flex), of an infinite bar-joint framework in $\bR^d$ is that of an excitation state (first-order simple harmonic motion oscillatory state) with an associated wave vector {\bf k}. For a crystallographic framework $\C$ these wave vectors live in the reciprocal space, $\bR^d_{\bf k}$ say, determined by a basis $\ul{a} =\{a_1, \dots ,a_d\}$ of periodicity vectors for $\C$. The periodic reduction of the set of wave vectors is the RUM spectrum (or reduced zero mode spectrum) for the pair $\C, \ul{a}$, denoted $\Omega(\C,\ul{a})$, and this can be defined more directly, as we do below, in terms of complex infinitesimal flexes that are periodic up to a multiphase factor \cite{owe-pow-crystal}, \cite{pow-poly}. In this section we give a definition of a zero mode spectrum for some quasicrystal frameworks, and which are motivated by results in the previous sections.


\subsection{Zero mode spectra for crystals} We first summarise the formulation of a zero mode of a crystal framework in terms of \emph{phase fields} for a cell partition of the ambient space. 


A \emph{cell partition} for a basis $\ul{a}$ of $\bR^d$ is a partition $\P = \{C_k:k\in \bZ^d\}$ where $C_k$ is the semi-open parallelepiped
\[
C_k = [k_1a_1,(k_1+1)a_1)\times \dots \times [k_da_d,(k_d+1)a_d).
\] 
A wave vector ${\bf k} = (\gamma_1,\dots , \gamma_d)$ corresponds to the {phase field} map $\phi_{\bf k}: \bR^d \to \bC$ given by
\[
\phi_{\bf k}(x) = 
 e^{2\pi i{\gamma_1}k_1}\dots e^{2\pi i{ \gamma_d}k_d}, \quad x \in C_k, \quad k\in \bZ^d.
\]
This phase field, in physical space, is constant on each individual cell of the partition. Let $\C$ be a crystallographic framework with a set of periodicity vectors $\ul{a}=\{a_1,\dots , a_d\}$ which is a basis for $\bR^d$. Define an associated phase-periodic velocity field
$u$, for the wave vector ${\bf k}$, to be a map from the set of joints of $\C$ to velocity vectors in $\bC^d$, with the property
\[
u(T_k(p_i))=\phi_{\bf k}(T_k(p_i))u(p_i), \quad p_i \in C_0,
\]
where $T_k(p_i)$ is the joint in $C_k$ which is the translate of the joint $p_i$, in the cell $C_0$, given by
\[
T_k(p_i)= p_i+ k_1a_1+\dots +k_da_d.
\]
The $d$-tuple
\[
\omega = (\omega_1, \dots , \omega_d) = 
 (e^{2\pi i{\gamma_1}},\dots , e^{2\pi i{ \gamma_d}})
\]
is the \emph{multiphase} of $u$ and, with the usual multinomial notation convention, we have
\[
u(T_k(p_i))=\omega^k u(p_i), \quad k \in \bZ^d, p_i\in C_0.
\]

\begin{definition}\label{d:rumspec}\label{e:gridRumSpec}
The rigid unit mode spectrum, or RUM spectrum, $\Omega(\C,\ul{a})$, of a crystallographic bar-joint framework $\C$ and  periodicity basis $\ul{a}$, is the set of multiphases $\omega$ for which there exists a nonzero phase-periodic complex infinitesimal flex. The unreduced zero mode spectrum is the set of wave vectors ${\bf k}$ whose  multiphase $\omega$ is a point of $\Omega(\C,\ul{a})$.
\end{definition}

\begin{example}\label{e:gridexample} For a simple but relevant illustration let $d=2$ and let $\C_{\bZ^2}$ be the parallelogram bar-joint framework for the tiling by squares where the joints have integer coordinates. The basis $\ul{a}=\{(1,0),(0,1)\}$ is a periodicity basis and each set $C_k, k\in \bZ^2$, contains a single joint, say $p_k$. For each $\lambda \in \bT$ the velocity fields
\[
u_{x,\lambda}(p_k) = \lambda^{k_2}(1,0), \quad 
u_{y,\lambda}(p_k) = \lambda^{k_1}(0,1), \quad k=(k_1,k_2)\in \bZ^2,
\]
are infinitesimal flexes and so $\Omega(\C,\ul{a})$ contains the set $ (\{1\}\times \bT) \cup (\bT\times\{1\})$.
Note moreover, that the complex infinitesimal flex vector space of $\C_{\bZ^2}$ has a free basis of complex infinitesimal flexes which are supported on horizontal and vertical lines of joints, and $u_{x,\lambda}$ (resp. $u_{y,\lambda}$)
is an infinite linear combination of the horizontal (resp. vertical) line flexes. 
One can use this free basis to show that there are no other points in the RUM spectrum. To see this, consider the free basis of infinitesimal flexes $u_{x,t}, u_{y,t}$, for $t\in \bZ$, where
$u_{x,t}(p_k)= (1,0)$ if $t=k_2$, and $u_{y,t}(p_k)= (0,1)$ if $t=k_1$. Suppose that $z$ is a zero mode for $\lambda= (\lambda_1, \lambda_2)$. Then $z$ has a representation
\[
z = \sum_t \alpha_{x,t}u_{x,t} + \alpha_{y,t}u_{y,t}.
\]
Thus
\[
\lambda_1^i\lambda_2^jz(p_{0,0}) = z(p_{i,j}) = 
 \alpha_{x,j}(1,0) + \alpha_{y,i}(0,1) =
 (\alpha_{x,j},\alpha_{y,i}).
\]
Since $z$ is nonzero the velocity vector $z(p_{0,0})$ is also nonzero. Now the equations above, for $0\leq i,j\leq 1$, imply that at least one of $\lambda_1, \lambda_2$ is equal to 1.

For a direct general approach to the determination of  $\Omega(\C,\ul{a})$, 
one considers the infinitesimal flex equations for a representative set of bars, such as the bars in the building block motif. For the pair $\C,\ul{a}$ these equations are satisfied for a nonzero phase-periodic velocity field $u$ of the form
\[
u(p_k) = \lambda_1^{k_1}\lambda_2^{k_2}(a,b),
\]
if the conjugate $(\ol{\lambda_1},\ol{\lambda_2})$  is a point of rank degeneracy of a $2 \times 2$ matrix-valued function $\Phi(z_1,z_2)$ on $\bT^2$, with $(a,b)$ in the null-space of $\Phi(\ol{\lambda_1},\ol{\lambda_2})$. This also leads to the identification of $\Omega(\C,\ul{a})$ as $( \{1\}\times \bT) \cup (\bT\times\{1\})$. 

It is also convenient, and common in applications, to record the RUM spectrum as a reduced set of wave vectors. For this example the spectrum is represented as the subset of the unit square $[0,1)\times [0,1)$ of points $(\gamma_1, \gamma_2)$ with $\gamma_1=0$ or $\gamma_2=0$. 

The matrix function is known as the \emph{symbol function} of the crystal framework with respect to the periodicity basis $\ul{a}$. See  \cite{owe-pow-crystal}, \cite{pow-poly} and \cite{kas-kit-mcc} for further details.
\end{example}

\subsection{Zero mode spectra for quasicrystals}
In the absence of an underlying periodic structure for an aperiodic framework we consider some notions of zero mode spectra which relate to generalised phase fields. 
In the case of multigrid parallelogram frameworks $\G_P$ we define zero modes in terms of multiparameter phase fields.
As in the crystallographic case these modes are in fact determined by the points $\omega$ of rank degeneracy of a multi-variable matrix-valued function and the associated null space vectors. However, the independent variables range in an $r$-torus, $\bT^r\subset \bC^r$, rather than a 2-torus, where $r$ is the number of component grids of the associated multigrid $P_*$. 

Consider a regular multigrid $P_*$, in the ambient space $\bR^2_*$, with $r$ component grids, no two of which are parallel. Each  grid  defines a partition of $\bR^2_*$ by semi-open bands. If $\A=\{A_1, \dots , A_r\}$ is the set of affine transformations determining this multigrid then a partition of $\bR^2$ is given by the images under $A_i$ of the partition by vertical left-closed bands $B_m, m\in \bZ$, associated with the integer lattice $\L$, where $B_m= [m,m+1)\times \bR$. Taking the intersection of these $r$ partitions gives a partition $C(P_*)$
of $\bR^2_*$ by semi-open polygonal regions. Each set of the partition has the form $C_m= A_1(B_{m_1}) \cap \dots  \cap A_r(B_{m_r})$ for some $r$-tuple $m=(m_1,\dots , m_r)$. Let $\M$ be the subset of $\bZ^r$ consisting of these $r$-tuples. The interiors of the polygon sets give a partition of the complement of $P_*$ in $\bR^2_*$, and their closures are the faces (or tiles) of $P_*$, when $P_*$ is viewed as a regular line tiling. 
Since faces, $f$ say, of $P_*$ correspond to vertices of $P$ we may denote the set of joints of $\G_P$ as 
$\{p_f: f \in F(P_*)\}$, or $\{p_m: m \in \M\}$.


\begin{definition}\label{d:qc_phasefield} Let $\G_P$ be a parallelogram framework for the regular multigrid $P_*$.
For each $r$-tuple  $\omega = (\omega_1,\dots ,\omega_r)$ in $\bT^r$ the phase field $\phi_\omega$ on the set of joints  $\{p_m:  m \in \M\}$ is given by 
$
\phi_\omega(p_m) = \omega^m= \omega_1^{m_1}\dots \omega_r^{m_r}, $ for 
$m\in \M$.
\end{definition}

A regular multigrid framework $\G_P$ has finite local complexity and indeed the joints are of finitely many translation types according to the translation types of the star graphs of vertices of $P$. Suppose that there are $n$ translation types and let $p_{f_1}, \dots , p_{f_n}$ be a choice of representatives.
Also write $p_f \equiv p_{f_i}$ if $p_{f}$ is of the same translation type as $p_{f_i}$.

\begin{definition}\label{d:qc_velocityfield} Let $\G_P$ be a parallelogram framework of a regular multigrid with $r$ component grids given by the set of affine transformations $\A=\{A_1, \dots , A_r\}$, and let $p_{f_1}, \dots , p_{f_n}$ be a set joints representing the $n$ possible translation types of joints.

(i) A \emph{phase-periodic velocity field} $u$ for $\G_P$, with phase factor $\omega\in \bT^r$, is a complex velocity field such that
\[
u(p_f) = \phi_\omega(p_f)b_i, \quad \mbox{if} \quad p_f \equiv p_{f_i},
\] 
where $b_1, \dots , b_n$ is a set of velocity vectors in $\bC^2$. 

(ii) A \emph{zero mode} of $\G_P$, with phase factor $\omega$, is a phase-periodic velocity field for $\omega$  that is an infinitesimal flex.

(iii) The \emph{reduced zero mode spectrum} of $\G_P$ is the set $\Omega(\G_P,\A)$ of phase factors $\omega \in \bT^r$ for the zero modes of $\G_P$.
\end{definition}

In this definition the phase fields
$\phi_\omega$ are defined only on the set of joints. However, these fields may be viewed as restrictions of phase fields on $\bR^2$ that are constant on the sets of a partition of $\bR^2$ induced by the partition $C(P_*)$, as we now indicate. In particular the spectrum $\Omega(\G,\A)$ may be defined in the same way if $\G$ is obtained from $\G_P$ in a systematic way by local moves with bars and joints. 


The regular multigrid $P_*$ defines an $\bR^2_*$-embedded graph $G(P_*)=(V(P_*),E(P_*)$) where $V(P_*)$ is the set of intersection points of the grids of $P_*$, and $E(P_*)$ is the set of line segments joining consecutive vertices on the lines of the grids.
Also, in $\bR^2$ the \emph{skeleton} sk$(P)$ of $P$ is defined as the union of the piecewise linear curves formed by the line segments joining the midpoints of opposite edges of the tiles of $P$. Using these curves define the $\bR^2$-embedded graph (the skeleton graph) $G_{\rm sk}(P)= (V_{\rm sk}, E_{\rm sk})$ where $V_{\rm sk}$ is the set of centres of the tiles and where $E_{\rm sk}$ is the double line segment
path in the skeleton joining these vertices for adjacent tiles. The definition of $P$ determines a bijection $V(P_*) \to V_{\rm sk}$ and this extends to a (nonunique) piecewise linear bijection $\beta: G(P_*)\to G_{\rm sk}(P)$. This in turn induces a unique partition
$C(P)=\beta(C(P_*))$. The sets of this partition are the sets
\[
\beta(C_m) = \beta(A_1(B_{m_1})) \cap \dots  \cap \beta(A_r(B_{m_r}))
\]
and these are intersections of semi-open irregular bands $\beta(A_i(B_{m_i}))$ that are located between the skeletal curves of adjacent ribbons.

\begin{definition}\label{d:ribbonPhaseField} 
Let $P$ be the parallelogram tiling for a regular multigrid with ribbon partition $\{C_m: m \in \M\subset \bZ^r\}$ and let $\omega$ in $\bT^r$ be a multiphase. Then the \emph{ribbon phase field} $\phi_\omega :\bR^2 \to \bC$ is given  by
\[
\phi_\omega(x) = \omega^m, \quad \mbox{for}\mbox \quad x \in \beta(C_m).
\]
\end{definition}

The potential utility of ribbon phase fields is firstly that, in analogy with the crystallographic case, phase-periodic velocity fields can be defined by their restrictions when considering bar-joint frameworks $\G$ which are derived from $\G_P$ in some systematic manner. Secondly, taking a submultigrid, such as
$P_*'= \{A_1(\L'), \dots ,A'_r(\L')\}$ where
$\L'$ is the lattice $2\L = 2\bZ \times 2\bZ$, leads to a courser partition and associated phase fields which may be relevant to such derived frameworks. This is analogous to the crystallographic move of replacing a primitive periodicity basis by some other periodicity basis which enlarges the choice of building block.

Let $\rho_{i,k}, k\in \bZ,$ be a consecutive enumeration of the ribbons of a regular multigrid parallelogram tiling $P$  which are associated with the $i^{th}$ component grid.
Recall that a ribbon shear $u_{i,k}$ for $\rho_{i,k}$ is an infinitesimal flex that has zero velocity vectors  for the joints on one side of the ribbon and which has a common velocity vector on the other side. The latter vector  ${\bf t}_\rho$ 
is orthogonal to the internal edges of the ribbon. We can therefore choose ribbon shears
$w_{i,k}, k\in \bZ,$ in a consistent way, 
with common velocity ${\bf t}_i$, and with the support of $w_{i,k}$ contained in the support of $w_{i,k-1}$. Thus the differences $z_{i,k}= w_{i,k}-w_{i,k-1}$ are linearly localised infinitesimal flexes that are supported on the joints contained in the bands $\beta(A_i(B_{k}))$, for $k\in \bZ$. 

The infinitesimal flexes  $z_{i,k}$, for $ k \in \bZ,$ have disjoint supports and so for each $\lambda \in\bT$ the velocity vector
$
z_{i,\lambda}= \sum_k \lambda^k z_{i,k}
$
is an infinitesimal flex. It follows that $z_{i,\lambda}$ is a zero mode with phase-factor $\omega$ where $\omega_i=\lambda$ and $\omega_j=1, j\neq i$. 
Thus, the zero mode spectrum $\Omega(\G_P,\A)$ contains  the $r$-fold union
\[
(\bT\times \{1\}\times \dots  \times  \{1\}) \cup \dots \cup  
(\{1\}\times \dots \times \{1\}\times \bT).
\]
It can be shown that each infinitesimal flex
$z_{i,k}$ can be represented as a pointwise convergent  infinite linear combination of the zero modes $z_{i,\lambda}$, with $1\leq i\leq r, k\in \bZ,$ and $\lambda = e^{2\pi i\gamma}, \gamma \in \bQ$. 
In the light of this, and in analogy with Example \ref{e:gridexample}, we expect that  $\Omega(\G_P,\A)$ is  equal to the $r$-fold union above.




\begin{rem}
Finally we remark that there are other forms of zero mode spectrum that can be defined, for aperiodic bar-joint frameworks $\G$, which are based in part on the presence  linearly localised flexes which can be phase-periodic in their localised directions. We consider this in detail elsewhere. The main idea for plane frameworks is to consider phase fields for band partitions in the ambient space of $\G$, associated with unrestricted parallelogram partitions, and to consider infinitesimal flexes with approximate forms of phase-periodicity. This leads to an associated zero mode spectrum,  consisting of lines of wave vectors ${\bf k}$ in a reciprocal space $\bR^2_{\bf k}$ relative to an  arbitrary reference basis for $\bR^2$.  For a regular multigrid parallelogram framework $\G_P$ this ``essential linear spectrum" is identified with the reciprocal figure of $RF(\G_P)$.
\end{rem}

\bibliographystyle{abbrv}
\def\lfhook#1{\setbox0=\hbox{#1}{\ooalign{\hidewidth
  \lower1.5ex\hbox{'}\hidewidth\crcr\unhbox0}}}

\end{document}